\newcounter{thm}
\newtheorem{therm}[thm]{Theorem}
\newtheorem{lem}[thm]{Lemma}
\newtheorem{cor}[thm]{Corollary}
\title{ On g-Extra Connectivity of Corona-Type Graph Products.
}
\author[a]{Arati Sharma\thanks{Email: sharmaarati28@gmail.com}}
\author[a]{Satyam Guragain}
\author[a]{Ravi Srivastava\thanks{Corresponding author, Email: ravi@nitsikkim.ac.in}}
\affil[a]{Department of Mathematics, National Institute of Technology Sikkim, Sikkim 737139, India}
\date{}
\pgfplotsset{compat=1.18}
\begin{document}
\maketitle

\begin{abstract}
\noindent Connectivity is one of the central ideas in graph theory, especially when it comes to building fault-tolerant networks. A cutset $S$ of $G$ is defined to be the set of vertices in $G$ whose removal disconnects the graph. An $R_g$ cutset of $G$ is a cutset whose removal disconnects the graph in such a way that each connected component has at least $g+1$ vertices. If $G$ has at least one $R_g$ cutset then the $g-$extra vertex connectivity (or the $g-$extra edge connectivity), denoted as $\kappa_g(G)$ ($\lambda_g(G)$), is defined as the minimum cardinality of $R_g$ cutset. In this paper, we obtain the $g-$ extra connectivity of various corona type graph products edge corona, neighbourhood corona, subdivision vertex neighbourhood corona,subdivision edge neighbourhood corona and generalised corona product.
\end{abstract}
\section{Introduction}
All graphs considered in this paper are connected, simple, and finite.
Let $G=(V,E)$ be a graph with vertex set $V=\{v_1,v_2,...,v_n \}$ and edge set $E(G)=\{e_1,e_2,...,e_n\}$.The degree of a vertex $v$ in a graph $G$, denoted as $deg_G(v)$, is the number of edges incident to $v$. The neighbourhood of a vertex $v$ in a graph $ G$, denoted by $N_G (u)$, is the set of all vertices adjacent to $u$. The minimum degree of graph $G$, denoted as $\delta(G)$, is the smallest degree among all $v$. A separating set or vertex cut of a graph $G$ is a set $S\subseteq V(G)$ such that $G-S$ has more than one component. The connectivity of $G$, written as $\kappa(G)$, is the minimum size of a vertex set such that $G-S$ is disconnected. An edge cut is a set $F\subseteq E(G)$ such that $G-F$ has more than one component. The edge-connectivity of $G$, written as $\lambda(G)$, is the minimum size of an edge set such that $G-F$ is disconnected. The $g-$extra vertex connectivity (resp.  $g-$extra edge connectivity) of a graph is the set $S\subseteq V(G)$(resp. $S \subseteq E(G)$ whose removal disconnects the graph and each component has at least $g+1$ vertices, denoted as $\kappa_g(G)$ (resp. $\lambda_g(G)$). If no such set exists, we define $\kappa_g(G)= \infty $ ( resp. $\lambda(G)=\infty$). 
Define $H^i$ as the copy of $H$ corresponding to the $i^{th}$ vertex of $G$. Define $H_{e_{ij}}$ as the copy of $H$ corresponding to edge $e_{ij}$. The subdivision graph of a graph $G$, denoted by $S(G)$, is the graph obtained by subdividing every edge of $G$ exactly once, that is, for each edge $e = uv \in E(G) $, a new vertex is inserted into $e$, and the edge $e$ is replaced by two edges $ (u, w)$and $(w, v)$, where $ w $ is the newly inserted vertex. Clearly, $\kappa_0(G)= \kappa(G) $ for any connected non-complete graph $G$.
In \cite{wang2019g}, Wang et al.in their study investigated the $g$-extra connectivity of various graph types. The graphs for which $\kappa_g(G) = 1$, $2$, or $3$. For trees with $n$ vertices, they demonstrated that the $g$-extra connectivity is given by $\kappa_g(T_n) = n - 2g - 2$. In \cite{wang2024g}, Wang et al. derived the precise values of $ g-$extra connectivity of the lexicographic product, and Cartesian product, along with some applications. Zhu and Tian \cite{zhu2023extra} studied $g(\geq 3)$extra connectivity for a strong product of $G_i$ $i=1,2$, where $G_i$ is a maximally connected regular graph. Gu and Hao \cite{gu20143} deduced the $3-$extra connectivity of $3-$ary-$n-$ cubes as a continuation of the work of Hsieh and Chang\cite{hsieh2012extraconnectivity} who obtained $2-$ extra connectivity for k-ary-n cubes $Q_n^k$ for $k\geq 4$ and $n\geq5$. Zhu et al. \cite{zhu2011extra}obtained $1-$extra connectivity and $2-$extra connectivity for$ 3-$ary-$n-$cube. Chang et al. \cite{chang20133} obtained $3-$extra connectivity (resp. $3-$extra edge connectivity) of an n-dimensional folded hypercube and provided an upper bound for $g-$extra connectivity on folded hypercubes for $g\geq 6$. Further, Zhang and Zhou \cite{zhang2015g} extended $g-$extra connectivity of $n-$dimensional folded hypercubes for $0 \leq g \leq n + 1$ and $n\geq 7$. Whang et al. \cite{wang2018g} explored the $g-$extra connectivity and $g-$extra diagonalisability of the crossed cube. Li et al. \cite{li2021extra} obtained the $g-$extra edge connectivity for some special graphs and determined the upper and lower bounds of $g-$extra edge connectivity. In this paper, we explore the $g-$extra vertex connectivity and $g-$ extra edge connectivity of several graph products. The rest of the paper is organized as follows. In section \ref{1} we derive the $g-$ extra vertex connectivity and $g-$ extra edge connectivity of corona-type products.
\section{Application and Background}
A multiprocessor system (MPS) comprises of an autonomous processor with local memory interconnected via communication links. When designing a multiprocessor system it is presumed that any group of components, such as processors or links, could fail at once. One of the primary considerations while designing MPS is its system level fault tolerance. Fault refers to processor or link failures ans a system is said to be fault-tolerant if it can continue to function in spite of some failure. The architecture of a MPS is typically represented by a graph $G =(V,E)$ where $V$ denotes the set of processors and $E$ denotes the set of communication links connecting the processors. Two functionality criteria have been widely studied by \cite{esfahanian2002generalized}. One of these criteria for the system to be considered functional is whether the network logically retains the topological structure. This issue arises when one design is embedded within another, the presence of specific topology plays a crucial role in providing the required performance. The second criteria claims the system is functional if there is a fault-free communication path between each pair of non-faulty nodes. Hence, connectivity plays a major role when it comes to building fault-tolerant networks.
\section{The $g-$ extra vertex connectivity and $g-$ extra edge connectivity of corona-type products} \label{1}
We consider the base graph $G$ as a connected,non-complete graph in all the products discussed below.
\subsection{Edge corona}\label{sec1}
Definition: Let $G$ and $H$ be two graphs such that $G$ has $n_1$ vertices and $m_1$ edges, while $H$ has $n_2$ vertices and $m_2$ edges. The edge corona of $G$ and $H$, denoted by $G \diamond H$, is the graph obtained by taking one copy of $G$ along with $m_1$ separate copies of $H$ such that each edge $e_i = u_iv_i \in E(G)$, is connected to every vertex in the $i$-th copy of $H$.
\begin{lem}
    \cite{wang2019g} Let $g$ be a non-negative integer, and let $G $ be a connected graph. Then,\begin{equation*}
  \kappa_g(G) \leq \kappa_{g+1}(G).
    \end{equation*}
        \end{lem}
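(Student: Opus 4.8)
The plan is to deduce the inequality directly from the nesting of the families of restricted cutsets, so that no structural information about $G$ is actually needed. First I would record the elementary observation that every $R_{g+1}$ cutset of $G$ is also an $R_g$ cutset of $G$: if $S \subseteq V(G)$ is a cutset such that each component of $G - S$ has at least $(g+1)+1 = g+2$ vertices, then a fortiori each component has at least $g+1$ vertices, which is exactly the requirement for $S$ to be an $R_g$ cutset. Hence the collection of $R_{g+1}$ cutsets of $G$ is contained in the collection of $R_g$ cutsets of $G$.

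Next I would argue by cases on whether $G$ possesses an $R_{g+1}$ cutset at all. If it does not, then $\kappa_{g+1}(G) = \infty$ by the convention fixed in the introduction, and $\kappa_g(G) \leq \infty = \kappa_{g+1}(G)$ holds trivially, regardless of whether $\kappa_g(G)$ itself is finite. If $G$ does admit an $R_{g+1}$ cutset, choose one of minimum size, say $S$, so that $|S| = \kappa_{g+1}(G)$. By the containment above, $S$ is also an $R_g$ cutset, so the minimum cardinality of an $R_g$ cutset cannot exceed $|S|$; that is, $\kappa_g(G) \leq |S| = \kappa_{g+1}(G)$.

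The only genuinely delicate point is the bookkeeping with the symbol $\infty$: one should make sure the statement remains true in the degenerate regime where, for $g$ large relative to $|V(G)|$, no $R_g$ cutset exists. This is exactly what the first case handles, and it is in effect the sole obstacle; everything else is immediate from the monotonicity built into the defining condition ``each component has at least $g+1$ vertices.'' Since the lemma is quoted from \cite{wang2019g}, I would present the argument in just two or three lines.
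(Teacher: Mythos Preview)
Your argument is correct and is exactly the standard monotonicity proof: any $R_{g+1}$ cutset is automatically an $R_g$ cutset, so the minimum over the larger family can only be at least as large, with the $\infty$ convention handling the degenerate case. Note that the paper does not actually prove this lemma at all---it is stated with a citation to \cite{wang2019g} and used as a known result---so there is no in-paper proof to compare against; your write-up would serve as a self-contained justification.
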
 
\begin{lem}
    \cite{wang2019g} Let $g$ be a non-negative integer and let $G$ be a connected graph of order $n$ such that $0 \leq g \leq \left\lfloor \frac{n-\kappa(G)-2}{2} \right\rfloor
 $.Then,
    \begin{equation*}
        \kappa(G)\leq \kappa_g(G) \leq n-2g-2.
    \end{equation*}
\end{lem}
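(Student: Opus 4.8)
The plan is to establish the two inequalities separately. For the left-hand bound $\kappa(G)\le\kappa_g(G)$, I would argue straight from the definitions: every $R_g$-cutset $S$ is in particular an ordinary vertex cut, since $G-S$ is disconnected, so $|S|\ge\kappa(G)$; minimising over all $R_g$-cutsets gives the claim. I would also record at this point that the hypothesis $0\le g\le\bigl\lfloor\tfrac{n-\kappa(G)-2}{2}\bigr\rfloor$ is, because $g$ is an integer, equivalent to $\kappa(G)\le n-2g-2$, so its purpose is exactly to guarantee that the two ends of the displayed chain are consistent; rewriting it as $n\ge\kappa(G)+2g+2$ is the form that feeds the upper bound.

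For the right-hand bound the crucial reduction is that \emph{any} $R_g$-cutset is automatically small: if $S$ is an $R_g$-cutset then $G-S$ has at least two components, each of order at least $g+1$, whence $|V(G)\setminus S|\ge 2(g+1)$ and so $|S|\le n-2g-2$. Taking $S$ to be a minimum $R_g$-cutset then yields $\kappa_g(G)\le n-2g-2$. Consequently the entire content of the upper bound is the \emph{existence} of an $R_g$-cutset under the hypothesis $n\ge\kappa(G)+2g+2$.

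To build one, I would begin with a minimum vertex cut $S_0$ (so $|S_0|=\kappa(G)$) and let $C_1,\dots,C_k$, $k\ge2$, be the components of $G-S_0$; note $\sum_{i=1}^{k}|V(C_i)|=n-\kappa(G)\ge 2(g+1)$. If two of these components, say $C_1$ and $C_2$, each have order at least $g+1$, then $S:=V(G)\setminus\bigl(V(C_1)\cup V(C_2)\bigr)$ already works: $C_1$ and $C_2$ are distinct components of $G-S_0$, so no edge joins them, and $G-S=G[V(C_1)]\sqcup G[V(C_2)]$ is disconnected with both parts connected and of order at least $g+1$. The delicate case — and the step I expect to be the main obstacle — is when at most one component of $G-S_0$ reaches order $g+1$: then one must regroup the small components together with $S_0$ into the cutset and, if necessary, cut the single large component $C_1$ into two connected pieces each of order at least $g+1$. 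I would try to do this by choosing a spanning tree of $C_1$, rooting it, and descending to a vertex whose subtree has order at least $g+1$ while its complement in $C_1$ still has order at least $g+1$; pinning down that such a balanced split always exists is precisely where the structure of $G$ must be invoked, since degenerate configurations (a star is the extreme example) can otherwise obstruct it, and this is the one point of the argument that is not purely formal.
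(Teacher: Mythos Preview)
The paper does not prove this lemma; it is quoted verbatim from \cite{wang2019g} and no argument is given. So there is nothing in the paper to compare your proposal against.

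Your lower bound and your reduction of the upper bound to the \emph{existence} of an $R_g$-cutset are both correct and clean: any $R_g$-cutset is in particular a vertex cut, and any $R_g$-cutset automatically has size at most $n-2g-2$ because its removal leaves at least $2(g+1)$ vertices behind. You are also right to isolate the spanning-tree splitting step as the only non-formal point.

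That step, however, is not merely delicate but actually unprovable in the stated generality, and your own ``extreme example'' already witnesses this. Take $G=K_{1,n-1}$ with $n\ge5$: here $\kappa(G)=1$, so the hypothesis permits any $g$ with $1\le g\le\bigl\lfloor(n-3)/2\bigr\rfloor$. Yet every vertex cut of a star must contain the centre, and deleting the centre (together with any subset of leaves) leaves only isolated vertices; hence no $R_g$-cutset exists for $g\ge1$ and $\kappa_g(G)=\infty$, contradicting $\kappa_g(G)\le n-2g-2$. The lemma as reproduced therefore requires an extra hypothesis---for instance that $G$ possesses at least one $R_g$-cutset, or a minimum-degree condition excluding such degenerate graphs---and without it your existence argument cannot be completed. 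Under the natural reading that the upper bound is asserted only when $\kappa_g(G)<\infty$, your proof is complete and correct as written.
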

\begin{lem} \cite{li2021extra} Let $g$ be a non-negative integer and let $G$ be a connected graph. Then \begin{equation*}
 \lambda_g(G)\leq \lambda_{g+1}(G).   
\end{equation*}
\end{lem}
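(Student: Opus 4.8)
\emph{Proof sketch.} The plan is to observe that the family of $R_{g+1}$ edge cuts of $G$ is contained in the family of $R_g$ edge cuts, so that the minimum cardinality taken over the former cannot be smaller than the minimum taken over the latter. This is the edge analogue of Lemma~1, and the argument is essentially a matter of unwinding the definitions rather than any real computation.

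First I would dispose of the degenerate case: if $G$ admits no $R_{g+1}$ edge cut, then $\lambda_{g+1}(G)=\infty$ by convention, and the inequality $\lambda_g(G)\leq\lambda_{g+1}(G)$ holds trivially. So I would assume $\lambda_{g+1}(G)<\infty$ and choose an $R_{g+1}$ edge cut $F\subseteq E(G)$ realizing it, i.e.\ with $|F|=\lambda_{g+1}(G)$; by definition $G-F$ is disconnected and each of its components contains at least $(g+1)+1=g+2$ vertices.

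The key step is the elementary observation that $g+2\geq g+1$, so every component of $G-F$ has at least $g+1$ vertices, which means $F$ is itself an $R_g$ edge cut of $G$. Hence $\lambda_g(G)\leq|F|=\lambda_{g+1}(G)$, as required. I do not expect a genuine obstacle here; the only point that must be stated with care is the set inclusion ``every $R_{g+1}$ edge cut is an $R_g$ edge cut,'' after which monotonicity of the minimum over a nested family concludes the proof. (Replacing edge cuts by vertex cuts throughout yields an identical proof of Lemma~1.)
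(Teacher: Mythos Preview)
Your argument is correct and is exactly the natural one: every $R_{g+1}$ edge cut is automatically an $R_g$ edge cut, so the minimum over the larger family cannot exceed the minimum over the smaller one, with the $\infty$ convention handling the degenerate case. The paper itself does not prove this lemma but simply cites it from \cite{li2021extra}; your proof is the standard justification and nothing more is needed.
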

\begin{lem}\label{lem1}
Let $G$ be a connected graph of order at least $3$ and let $H$ be a connected graph of order $m$. 
Let $k$ be a maximum integer such that there exists a minimum vertex cut $A \subseteq V(G)$ with the property that every component of $G-A$ contains at least $k+1$ vertices. 
Then, for $0 \leq g+1 \leq (k+1) + km + \delta(G) m$, we have
\begin{equation*}
 \kappa_{g}(G \diamond H) = 1 \quad \text{if and only if} \quad \kappa(G) = 1.   
\end{equation*}
\end{lem}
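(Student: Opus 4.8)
The plan is to prove the two implications separately, using the basic structural feature of the edge corona: each copy $H_{e}$ attached to an edge $e=u_iv_i\in E(G)$ is joined completely to both $u_i$ and $v_i$, so $H_{e}$ stays connected to the rest of $G\diamond H$ as long as at least one of $u_i,v_i$ survives. For the direction $\kappa_g(G\diamond H)=1\Rightarrow\kappa(G)=1$, note first that every $R_g$-cutset is in particular a cutset, so $\kappa_g(G\diamond H)\ge\kappa(G\diamond H)$, and hence $\kappa(G\diamond H)=1$; let $x$ be a cut vertex of $G\diamond H$. We would rule out $x\in V(H_{e})$ for any $e=u_iv_i$: the other vertices of $H_{e}$ remain adjacent to $u_i$, and the edge $u_iv_i\in E(G)$ keeps $G$ connected, so $G\diamond H-x$ would be connected, a contradiction. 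Hence $x\in V(G)$; and if $G-x$ were connected, then every copy $H_{e}$ would still hang on a surviving endpoint of $e$ inside $G-x$, so $G\diamond H-x$ would be connected, again a contradiction. Therefore $x$ is a cut vertex of $G$, i.e.\ $\kappa(G)=1$. (This direction does not use the range restriction on $g$.)

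For the converse $\kappa(G)=1\Rightarrow\kappa_g(G\diamond H)=1$, a minimum vertex cut of $G$ is a single cut vertex, so by the definition of $k$ we may fix a cut vertex $a$ for which every component $C_1,\dots,C_t$ ($t\ge2$) of $G-a$ has $|C_j|\ge k+1$. We claim $\{a\}$ is an $R_g$-cutset of $G\diamond H$. After deleting $a$, the component of $G\diamond H-a$ containing $C_j$ is exactly $C_j$ together with all copies $H_{e}$ for which $e$ lies inside $C_j$ or joins $a$ to $C_j$; these copies are pairwise disjoint and their number equals $e(G[C_j\cup\{a\}])$ (writing $e(F)$ for the number of edges of $F$), so the component has order $|C_j|+m\cdot e(G[C_j\cup\{a\}])$. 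The problem then reduces to the inequality $e(G[C_j\cup\{a\}])\ge k+\delta(G)$ for every $j$: granting it, each component of $G\diamond H-a$ has order at least $(k+1)+km+\delta(G)m\ge g+1$ by hypothesis, so $\{a\}$ is an $R_g$-cutset and $\kappa_g(G\diamond H)\le1$; since $G\diamond H$ is connected, $\kappa_g(G\diamond H)=1$.

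The heart of the argument is the inequality $e(G[C_j\cup\{a\}])\ge k+\delta(G)$. To prove it, observe that every $v\in C_j$ has all of its $G$-neighbours inside $C_j\cup\{a\}$, so $\deg_{G[C_j\cup\{a\}]}(v)=\deg_G(v)\ge\delta(G)$ while $\deg_{G[C_j\cup\{a\}]}(a)\ge1$; the handshake lemma then gives $2\,e(G[C_j\cup\{a\}])\ge(k+1)\delta(G)+1$. The same observation shows $\deg_G(v)\le|C_j|$, hence $|C_j|\ge\delta(G)$, which together with $|C_j|\ge k+1$ gives the structural constraint $k\ge\delta(G)-1$. Feeding this constraint into the handshake bound, with a short case split on the parity of $(k+1)\delta(G)$ — and, in the degenerate case $\delta(G)=1$, using instead the spanning-tree bound $e(G[C_j\cup\{a\}])\ge|C_j\cup\{a\}|-1\ge k+1$ — yields $e(G[C_j\cup\{a\}])\ge k+\delta(G)$. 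I expect this final estimate to be the only delicate part: the examples $G=P_3$ and $G$ equal to two triangles sharing a vertex attain the bound $(k+1)+km+\delta(G)m$ with equality, so there is no slack, and the parity and boundary cases have to be handled carefully to avoid an off-by-one loss.
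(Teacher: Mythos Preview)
Your proof follows the same two-step outline as the paper: first show that a cut vertex of $G\diamond H$ must lie in $V(G)$ and be a cut vertex of $G$, then show that a well-chosen cut vertex of $G$ is an $R_g$-cutset of $G\diamond H$. The forward direction is essentially identical to the paper's. For the converse, however, the paper simply \emph{asserts} that each component of $G\diamond H-v$ has at least $(k+1)+km+\delta(G)m$ vertices; you go further and actually justify this by counting the edges of $G[C_j\cup\{a\}]$ and proving $e(G[C_j\cup\{a\}])\ge k+\delta(G)$. The case analysis you sketch does close: $\delta(G)=1$ via the spanning-tree bound $e\ge|C_j|\ge k+1$; $\delta(G)=2$ via parity of the handshake count $2e\ge 2(k+1)+1$; and $\delta(G)\ge 3$ by combining $2e\ge(k+1)\delta(G)+1$ with $k\ge\delta(G)-1$ to get $2e\ge 2k+2\delta(G)$. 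One small slip in your write-up: the constraint $k\ge\delta(G)-1$ does not follow from ``$|C_j|\ge\delta(G)$ together with $|C_j|\ge k+1$'' as you phrase it; rather, since every component of $G-a$ has at least $\delta(G)$ vertices and $k+1$ equals $\max_A\min_j|C_j^A|$ over minimum cuts $A$, one gets $k+1\ge\delta(G)$ directly from the maximality of $k$. With that corrected, your argument is in fact more complete than the paper's.
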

\begin{proof}

Let $\kappa_{g}(G\diamond H)=1$, that is, there exists a vertex $u  \in G\diamond H$ whose removal disconnects the graph and each connected component has at least $g+1$ vertices. We prove $u\in G$. Assume to the contrary that $u \not\in G $ which implies $u \in H$, but removing a vertex from $H$ will never disconnect $G \diamond H$ as the remaining $m-1$ vertices of $H$ will be connected to vertices of $G$. Therefore, $u \in G$ . Hence $G-u$ is disconnected, and since $G$ is connected, it implies $\kappa(G)=1.$ 
\\Conversely, let $\kappa(G)=1$ then there exists a vertex $v$ that disconnects the graph $G$. By definition of $G \diamond H$ , $G \diamond H- v$  is also disconnected with at least $(k+1)+km+\delta(G) m$ vertices, so at least $g+1$ vertices.  $\kappa_{g}(G \diamond H)=1$.

\end{proof}
\begin{therm}
  Let $G$ and  $H$ be two graphs of orders $n$ and  $m$ respectively such that $n\geq3$
  
      (i) If $0\leq g\leq m-1$, then  $\kappa_g(G \diamond H)= 1$ or $2$.
      
(ii) If $k (km+1) < g+1 \leq (k+1)+km+\delta(G) m$, then \begin{equation*}
   \kappa_g (G \diamond H)=
   \begin{cases}
     |A|  &\text{ when } A \text{ does not contain any adjacent vertices. } \\
    |A|+|A^{'}|m & \text{ when } A\text{ contains $A^{'}$ adjacent pairs}.
\end{cases} 
\end{equation*} 
where $A$ is defined to be the minimum cutset and  $G-A$   has at least $k+1 $ vertices in each component.
  \begin{proof}

 Let $V(G) =\{u_1,u_2,...,u_n\}$. It is clear $G\diamond H - (u_i,u_{i+1})$ is disconnected where $u_i \sim u_{i+1}$ and every connected component has at least $m$ vertices i.e., $(g+1)$ vertices hence, $\kappa_g(G \diamond H) \leq 2$. Since, $G \diamond H$ is a connected graph at least one vertex has to be removed to disconnect the graph, so $1\leq \kappa(G \diamond H )\leq \kappa_g(G \diamond H)\leq 2$. By lemma \ref{lem1},$\kappa_{g}(G \diamond H)=1$ if and only if $\kappa(G)=1$. If $\kappa(G)\geq2$, then $1 < \kappa_g(G \diamond H)\leq 2 $. Therefore we conclude $\kappa_g(G \diamond H)=2$. This proves $(i)$. To establish part (ii), we now consider two distinct cases.\\
     Case I: $A$ does not contain any adjacent vertices. By the construction of $G \diamond H $ and  definition of $A$ ,$G \diamond H -A$ is disconnected and each connected component has  $(k+1)+km+\delta(G)m $ vertices i.e., at least $(g+1)$ vertices and hence, $\kappa_g(G \diamond H) \leq |A|$.
      It suffices to prove $\kappa_g(G \diamond H) \geq |A|$.By definition of $\kappa_g(G \diamond H)$ there exists a cutset say  $T$ such that $\kappa_g(G \diamond H)=|T|$ where$(G \diamond H)-T$ is not connected and each connected component of $(G \diamond H)-T$ has at least $g+1$ vertices.\\
 Claim: $|T|\geq |A|$. \\
    Suppose the claim does not hold $|T|< |A|$. $G-T$ is not connected otherwise, we will get a component with fewer than $g+1 $ vertices, a contradiction. This further violates the assumption that $A$ is a minimum cutset of G. Therefore,$|T|\geq |A|$. Hence,$\kappa_g(G \diamond H)=|A|$.\\
        Case 2: When $A$ has adjacent vertices. Based on the definition of $A$, $G-A$ is disconnected and each connected component of $G-A$ has at least $(k+1)$ vertices. Let $|A|=x$ and $|A^{'}|=x^{'}$define $S= A \cup V(H_{e_{ij}})$where $e_{ij} \in E(A)$ , $|S|= x+x^{'}m$. $(G\diamond H)-S$ is disconnected and each connected component has atleast $(k+1)+km+\delta(G) m$ vertices i.e., at least $(g+1)$ vertices which implies $\kappa_g(G \diamond H) \leq |S| $. Let $\kappa_g(G \diamond H)=|T|$ such that $(G \diamond H)-T$ is not connected and each connected component of $(G \diamond H)-T$ has at least $g+1$ vertices.\\ Claim:$|T \cap V(G) | \geq |A|.$ Let if possible $|T \cap V(G) | < |A|.$ $G-T$ is disconnected so there exists a component $C_1$ with atmost $k$ vertices with at least $(k-1) $edges and atmost $\frac{k(k-1)}{2}$ edges. Hence, $C_1 \cup V(H_{e_{ij}})$ where $e_{ij} \in C_1$ is a connected component in $(G \diamond H)-T$  with fewer than $g+1$ vertices, a contradiction which implies $|T\cap V(G) | \geq |A|$. Since $T \cap V(G)  \subseteq T$ also the $H$ copy corresponding to edges in $T\cap V(G)$ will be contained in $T$. $|T|\geq |T\cap V(G)|(m+1) \geq |A| (m+1)\geq |A|+|A^{'}|m=|S|$. 
  \end{proof}
  \end{therm}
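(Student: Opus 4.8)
The plan is to prove the theorem in two parts, mirroring the structure of the statement, and in each part to establish matching upper and lower bounds on $\kappa_g(G \diamond H)$.

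\medskip

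\noindent\textbf{Part (i): the range $0 \le g \le m-1$.} First I would produce the upper bound $\kappa_g(G\diamond H)\le 2$ explicitly: pick any edge $u_iu_{i+1}\in E(G)$, remove both endpoints, and observe that the associated copy $H_{e_{i,i+1}}$ together with whatever it attaches to forms a component with at least $m$ vertices, while the rest of the graph is the other component(s); since $g+1\le m$, every component has at least $g+1$ vertices, so this is an $R_g$-cutset of size $2$. Next, since $G\diamond H$ is connected and non-complete it cannot have a $\kappa_g$-cutset of size $0$, so $1\le\kappa_g(G\diamond H)\le 2$. Finally I would invoke Lemma \ref{lem1} to pin the value down: if $\kappa(G)=1$ then $\kappa_g(G\diamond H)=1$, and otherwise $\kappa(G)\ge 2$ forces $\kappa_g(G\diamond H)\ge\kappa(G\diamond H)$, and one checks $\kappa(G\diamond H)\ge 2$ (any single vertex deletion leaves the graph connected, by the same argument as in Lemma \ref{lem1}), hence $\kappa_g(G\diamond H)=2$. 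This gives the dichotomy claimed in (i).

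\medskip

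\noindent\textbf{Part (ii): the range $k(km+1) < g+1 \le (k+1)+km+\delta(G)m$.} Here I split into the two cases in the statement. In both cases the upper bound is obtained by exhibiting a concrete $R_g$-cutset built from $A$: when $A$ has no two adjacent vertices, deleting $A$ from $G\diamond H$ disconnects it and each component inherits at least $k+1$ vertices of $G$ plus the $km$ vertices of the $H$-copies hanging on those $k+1$ vertices' incident edges plus at least $\delta(G)m$ more, so each component has at least $(k+1)+km+\delta(G)m\ge g+1$ vertices, giving $\kappa_g(G\diamond H)\le|A|$. When $A$ contains $A'$ adjacent pairs, one augments $A$ by the copies $V(H_{e_{ij}})$ for $e_{ij}\in E(A)$, obtaining a set $S$ of size $|A|+|A'|m$ that is still an $R_g$-cutset by the same counting, so $\kappa_g(G\diamond H)\le|A|+|A'|m$. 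For the lower bounds, let $T$ be a minimum $R_g$-cutset. The key claim is $|T\cap V(G)|\ge|A|$: if not, $G-T$ would have a component $C_1$ on at most $k$ vertices (here the hypothesis $g+1>k(km+1)$ is exactly what is needed, since a component on $\le k$ vertices of $G$ spawns a component of $G\diamond H$ on at most $k+k\cdot\binom{k}{2}\cdot\!$-ish, i.e.\ at most $k(km+1)$ vertices, which would then be $\le g$, a contradiction — the precise bookkeeping uses that $C_1$ has at most $\binom{k}{2}$ edges so carries at most $\binom{k}{2}m$ copy-vertices), contradicting either the $R_g$ property or the maximality/minimality of $A$. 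Given the claim, in the non-adjacent case $T\supseteq T\cap V(G)$ already has size $\ge|A|$; in the adjacent case, every copy $H_{e}$ with $e$ spanned by $T\cap V(G)$ must also lie in $T$ (else that copy plus its two endpoints would be a small component), so $|T|\ge|T\cap V(G)|(m+1)\ge|A|(m+1)\ge|A|+|A'|m$, matching the upper bound.

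\medskip

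\noindent The main obstacle I anticipate is the lower-bound claim $|T\cap V(G)|\ge|A|$, specifically getting the vertex-count inequality for a small component $C_1$ of $G-T$ tight enough against the threshold $k(km+1)$: one must carefully argue that a connected subgraph of $G$ on at most $k$ vertices, together with all the $H$-copies on its internal edges, has at most $k(km+1)$ vertices, and that the hypothesis on $g$ then forbids this from being a legitimate $R_g$-component. A secondary subtlety is making sure the definition of $k$ (maximum integer such that some minimum vertex cut $A$ has all components of $G-A$ of size $\ge k+1$) is used consistently — in particular that "$A$ is a minimum cutset" is not contradicted when we only know $|T\cap V(G)|<|A|$, which requires noting that $T\cap V(G)$ is itself a cutset of $G$ once we know $G-T$ is disconnected.
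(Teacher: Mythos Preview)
Your proposal is correct and follows essentially the same approach as the paper: explicit $R_g$-cutsets built from $A$ (augmented by the $H$-copies on $E(A)$ in the adjacent case) give the upper bounds, and for the lower bounds you argue $|T\cap V(G)|\ge|A|$ via a small-component contradiction and then chain $|T|\ge|T\cap V(G)|(m+1)\ge|A|(m+1)\ge|A|+|A'|m$, exactly as the paper does. The obstacles you flag (the vertex-count bookkeeping for a small component against the threshold $k(km+1)$, and checking that $T\cap V(G)$ is itself a cutset of $G$) are the same points the paper passes over, and your treatment of them is, if anything, more explicit than the paper's.
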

Note: The upper bound is not strict, there might exists cases when the number of vertices in each component exceeds $(k+1)+km+\delta(G) m$. This arises from the possibility that some vertices in each component may have degree more than
$\delta(G)$. An exact count can only be determined for $r- $regular graphs.
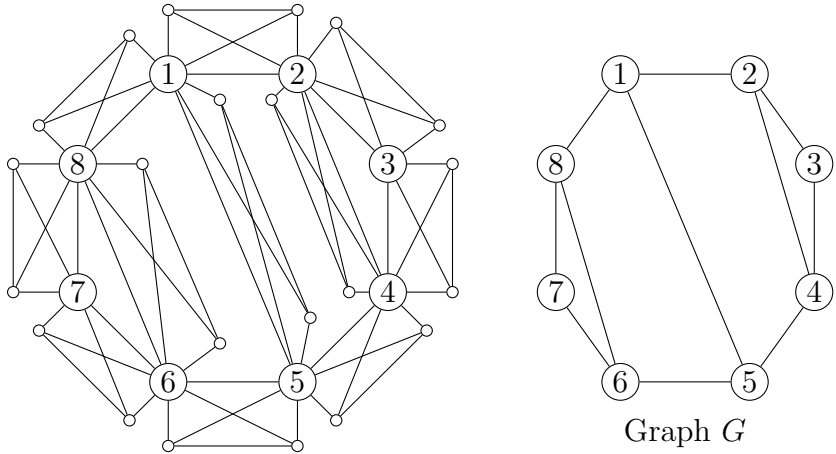
\begin{figure}[ht]
         \centering
         \begin{tikzpicture}[scale=0.17]

        \node (1) at (-5,12) [circle, draw, inner sep=1.5pt] {1};
        \node (2) at (5, 12) [circle, draw, inner sep=1.5pt] {2};
        \node (3) at (12,5) [circle, draw, inner sep=1.5pt] {3};
        \node (4) at (12, -5) [circle, draw, inner sep=1.5pt] {4};
        \node (5) at (5,- 12) [circle, draw, inner sep=1.5pt] {5};
        \node (6) at (-5, -12) [circle, draw, inner sep=1.5pt] {6};
        \node (7) at (-12, -5) [circle, draw, inner sep=1.5pt] {7};
        \node (8) at (-12, 5) [circle, draw, inner sep=1.5pt] {8};

        \draw (1) -- (2);
        \draw (2) -- (3);
        \draw (3) -- (4);
        \draw (4) -- (5);
        \draw (5) -- (6);
        \draw (6) -- (7);
        \draw (7) -- (8);
        \draw (1) -- (8);
        \draw (6) -- (8);
        \draw (1) -- (5);
        \draw (2) -- (4);
     \node (a) at (30,12) [circle, draw, inner sep=1.5pt] {1};
   \node (b) at (40, 12) [circle, draw, inner sep=1.5pt] {2};
       \node (c) at (45,5) [circle, draw, inner sep=1.5pt] {3}; 
    \node (d) at (45, -5) [circle, draw, inner sep=1.5pt] {4};   
     \node (e) at (40,- 12) [circle, draw, inner sep=1.5pt] {5};
    \node (f) at (30, -12) [circle, draw, inner sep=1.5pt] {6};
    \node (g) at (25, -5) [circle, draw, inner sep=1.5pt] {7};
    \node (h) at (25, 5) [circle, draw, inner sep=1.5pt] {8};
         \draw (a) -- (b);
        \draw (b) -- (c);
        \draw (c) -- (d);
        \draw (d) -- (e);
        \draw (e) -- (f);
        \draw (f) -- (g);
        \draw (g) -- (h);
       \draw (h) -- (a);
        \draw (a) -- (e);
        \draw (b) -- (d);
         \draw (h) -- (f);

        \node at (35, -16) {Graph $G$};

\node (11) at (-5,17) [circle, draw, inner sep=1.5pt] {};
\node (12) at (5,17) [circle, draw, inner sep=1.5pt] {};
\node (21) at (8,16) [circle, draw, inner sep=1.5pt] {};
\node (22) at (16,8) [circle, draw, inner sep=1.5pt] {};

      \node (31) at (17,5) [circle, draw, inner sep=1.5pt] {};
        \node (32) at (17, -5) [circle, draw, inner sep=1.5pt] {};
         
\node (41) at (15, -8) [circle, draw, inner sep=1.5pt] {};
        \node (42) at (8,- 15) [circle, draw, inner sep=1.5pt] {};

    \node (51) at (-5,-17) [circle, draw, inner sep=1.5pt] {};
        \node (52) at (5, -17) [circle, draw, inner sep=1.5pt] {};

\node (81) at (-15, 8) [circle, draw, inner sep=1.5pt] {};
        \node (82) at (-8,15) [circle, draw, inner sep=1.5pt] {};

 \node (71) at (-17,5) [circle, draw, inner sep=1.5pt] {};
        \node (72) at (-17, -5) [circle, draw, inner sep=1.5pt] {};
\node (61) at (-15, -8) [circle, draw, inner sep=1.5pt] {};
        \node (62) at (-8,- 15) [circle, draw, inner sep=1.5pt] {};

\node (91) at (-1,10) [circle, draw, inner sep=1.5pt] {};
        \node (92) at (6, -7) [circle, draw, inner sep=1.5pt] {};

\node (101) at (3,10) [circle, draw, inner sep=1.5pt] {};

 \node (102) at (9, -5) [circle, draw, inner sep=1.5pt] {};
  \node (111) at (-7, 5) [circle, draw, inner sep=1.5pt] {};
    \node (112) at (-1, -9) [circle, draw, inner sep=1.5pt] {};

        \draw (11) -- (12);
        \draw (21) -- (22);
        \draw (31) -- (32);
        \draw (41) -- (42);
        \draw (51) -- (52);
        \draw (61) -- (62);
        \draw (71) -- (72);
        \draw (81) -- (82);
         \draw (91) -- (92);
          \draw (101) -- (102);
          \draw (111) -- (112);

        \draw (11) -- (1);
        \draw (11) -- (2);
        \draw (12) -- (1);
        \draw (12) -- (2);
         \draw (21) -- (2);
        \draw (21) -- (3);
        \draw (22) -- (2);
        \draw (22) -- (3);
         \draw (31) -- (4);
        \draw (32) -- (4);
        \draw (31) -- (3);
        \draw (32) -- (3);
         \draw (41) -- (4);
        \draw (42) -- (4);
        \draw (41) -- (5);
        \draw (42) -- (5);
         \draw (51) -- (6);
        \draw (52) -- (6);
        \draw (51) -- (5);
        \draw (52) -- (5);
         \draw (61) -- (7);
        \draw (62) -- (7);
        \draw (61) -- (6);
        \draw (62) -- (6);
         \draw (71) -- (8);
        \draw (72) -- (8);
        \draw (71) -- (7);
        \draw (72) -- (7);
         \draw (81) -- (1);
        \draw (82) -- (1);
        \draw (81) -- (8);
        \draw (82) -- (8);
         \draw (111) -- (8);
        \draw (112) -- (8);
        \draw (111) -- (6);
        \draw (112) -- (6);
         \draw (91) -- (1);
        \draw (92) -- (1);
        \draw (91) -- (5);
        \draw (92) -- (5);
        \draw (101) -- (2);
        \draw (102) -- (2);
        \draw (101) -- (4);
        \draw (102) -- (4);

\end{tikzpicture}
         \caption{Example of $G \diamond K_2.$}
         \label{Edge corona}
     \end{figure}\\
   Example: In the figure \ref{Edge corona}, we observe that the minimum degree $\delta(G) = 2$.Suppose $A=\{ 1,5\}$,then $G-A$ has two components with atleast $3$ vertices and also $G\diamond H-A$ has two component. Based on our general bound, the value of $g + 1$ is upper bounded by $11$ vertices. However, due to the specific structure of the graph, we can extend $g + 1$ up to $13$ vertices. This variation arises because some components of $G - S$ includes vertices of degree greater than $\delta(G)$, which allows for a higher value of $g + 1$.
   \begin{lem}
       Let $G$ and $H$ be two connected graphs, and $F\subseteq E(G)$. Then, $G\diamond H\smallsetminus F$ is connected.
       \begin{proof}
          Each $e=(u,v) \in E(G)$ corresponds to a copy  $H_e $ in $G \diamond H$ which has every vertex adjacent to both  $u,v$. Removing edges from $G$ will not disconnect $G \diamond H$ as there always exists a path $u-x_i -v$, $x_i \in V(H). $ Hence every adjacency of $G$ is preserved in $G\diamond H$ even after deleting edges from $G-$part which implies no vertices are isolated therefore $G \diamond H$ is connected.
       \end{proof}
   \end{lem}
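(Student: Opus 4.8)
The plan is to show that removing a set $F$ of edges lying entirely inside the base graph $G$ cannot disconnect $G\diamond H$, because in the edge corona every adjacency of $G$ is simulated by a length-two detour through the copy of $H$ attached to that edge, and these detours use only corona edges, which $F$ never touches.

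First I would record the structural fact on which everything rests: for each edge $e_i=u_iv_i\in E(G)$, the copy $H_{e_i}$ of $H$ has the property that \emph{every} vertex $x\in V(H_{e_i})$ is joined in $G\diamond H$ to both $u_i$ and $v_i$. Since $F\subseteq E(G)$, none of the corona edges $u_ix$ or $xv_i$ lies in $F$, so all of these adjacencies survive in $G\diamond H\smallsetminus F$; in particular no vertex of any $H_{e_i}$ is isolated. Next I would show that $V(G)$ lies in a single connected component of $G\diamond H\smallsetminus F$: for any $e_i=u_iv_i\in E(G)$ (whether or not $e_i\in F$) and any $x\in V(H_{e_i})$, the path $u_i - x - v_i$ survives in $G\diamond H\smallsetminus F$, so every edge of $G$ is replaced by a surviving path between $u_i$ and $v_i$. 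As $G$ is connected, it follows that all of $V(G)$ is contained in one component $C$ of $G\diamond H\smallsetminus F$. Finally I would absorb the $H$-copies into $C$: each $x\in V(H_{e_i})$ is adjacent, via an edge not in $F$, to $u_i\in V(G)\subseteq C$, so $x\in C$; letting $e_i$ range over $E(G)$ exhausts $V(G\diamond H)$, whence $C=V(G\diamond H)$ and the graph is connected.

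I do not expect a genuine obstacle here; the only point requiring care is the bookkeeping that $F\subseteq E(G)$ really does avoid all the corona edges $u_ix$ and $xv_i$, which is exactly what keeps the length-two detours available. Equivalently one could note that $G\diamond H\smallsetminus F$ contains, as a spanning subgraph, the edge corona of the edgeless graph on $V(G)$ with $H$ together with those detours, and this subgraph is already connected precisely because $G$ is connected, so the general case $F\subseteq E(G)$ follows a fortiori. (Connectedness of $H$ is in fact not needed for the conclusion, since every vertex of every $H$-copy is directly tied to $V(G)$.)
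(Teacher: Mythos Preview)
Your argument is correct and follows exactly the same idea as the paper's proof: each edge $uv\in E(G)$ is replaced by a surviving length-two path $u\!-\!x\!-\!v$ through the attached copy $H_e$, so connectivity of $G$ forces all of $V(G)$, and hence all $H$-copies, into one component. Your write-up is simply a more carefully spelled-out version of the paper's sketch, including the correct side remark that connectedness of $H$ is not actually used.
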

\begin{therm}
    Let $G$and $H$ be two connected graphs with $n,m $ vertices respectively with $n\geq3$, $m\geq 2$. If $0\leq g\leq m-1$ then \begin{equation*}
   \lambda_g(G \diamond H)= \begin{cases}
    m+1  &\text{ when }  \lambda(G)=1 \\
    2m & \text{ when }\lambda(G)\geq1  .
\end{cases} 
\end{equation*}
\begin{proof}
    Let $e=(u_i,u_j) \in E(G)$. Let $u_i $ be a non pendant vertex. Clearly, $u_i$ corresponds to $(d(u_i)+m)$ edges in $G \diamond H$. $G\diamond H- E[u_i,V(H)] $will remove all the $m$ edges attached to $u_i$, additionally removing $e$ will result in disconnected graph with at least $g+1 $ vertices. Hence, $\lambda_g(G\diamond H)\leq m+1.$ Observe removing $m$ edges will not disconnect the graph as every vertex in $G \diamond H $ is connected to atleast $\delta(G)+m$ vertices. In order to isolated a vertex atleast $\delta(G)+m$ edges has to be removed.$\delta(G)+m \leq \lambda_g(G\diamond H)$. Since, $G$ is connected $1+m \leq \lambda_g(G\diamond H)$.\\
    Since, $m $ vertices of $H$ are connected to two vertices each in $G$, which corresponds to $2m $ edges in $G \diamond H$. Removing these $2m$ edges disconnect the graph into components with at least $(l+1)$ vertices $\lambda_g(G\diamond H)\leq 2m.$ Removing less than $2m$ edges in $G \diamond H$ will keep the graph connected as we remove edges from E[V(H), G] so at least $2m$ edges $2m\leq \lambda_g(G\diamond H)$.
\end{proof}
\end{therm}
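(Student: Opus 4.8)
The plan is to prove each equality by exhibiting an $R_g$ edge cut of the stated size and then showing no $R_g$ edge cut can be smaller, handling $\lambda(G)=1$ and $\lambda(G)\ge 2$ separately. The running tool is the previous lemma: deleting edges lying entirely in the $G$-part of $G\diamond H$ never disconnects it, so any disconnecting edge set must ``pay'' for the $H$-copies sitting across the cut. For a vertex set $W$, write $[W,\overline W]$ for the set of edges of $G\diamond H$ with exactly one end in $W$.

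\emph{Upper bounds.} If $\lambda(G)=1$, fix a bridge $e=u_iu_j$ of $G$ and let $W$ consist of the vertices on one side of $e$ in $G$, together with every copy $H_{e'}$ whose edge $e'$ lies on that side, and the copy $H_e$. Then $[W,\overline W]$ is exactly $\{e\}$ together with the $m$ edges from $V(H_e)$ to the endpoint of $e$ outside $W$, so $|[W,\overline W]|=m+1$; since $n\ge 3$ and $G$ is connected, $H_e$ may be assigned to whichever side keeps both $|W|\ge m$ and $|\overline W|\ge m\ge g+1$, giving $\lambda_g(G\diamond H)\le m+1$. If $\lambda(G)\ge 2$, take $W=V(H_e)$ for an arbitrary edge $e=u_iu_j$; each of the $m$ vertices of $H_e$ contributes only its two edges to $u_i$ and $u_j$, so $|[W,\overline W]|=2m$, and as $|W|=m\ge g+1$ while $\overline W$ contains the connected graph $G$, we get $\lambda_g(G\diamond H)\le 2m$.

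\emph{Lower bounds.} Let $F$ be a minimum $R_g$ edge cut. Taking $W$ to be a single component of $(G\diamond H)-F$ we may assume $F=[W,\overline W]$, where each component of the subgraph induced by $W$ and of that induced by $\overline W$ has at least $g+1$ vertices. First suppose $W\cap V(G)$ and $\overline W\cap V(G)$ are both non-empty. For each edge $e=u_iu_j$ of $G$ with $u_i\in W$ and $u_j\in\overline W$, the edge $e$ lies in $F$, and since every vertex of $V(H_e)$ is adjacent to both $u_i$ and $u_j$, at least $m$ further edges incident to $V(H_e)$ lie in $F$; these edge sets are pairwise disjoint over distinct $e$, so $|F|\ge (m+1)t$, where $t$ is the number of $G$-edges with one end in $W$ and one end in $\overline W$, and $t\ge\lambda(G)$ since $W\cap V(G)$ is a non-empty proper subset of $V(G)$. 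Hence $|F|\ge 2m+2$ if $\lambda(G)\ge 2$, and $|F|\ge m+1$ if $\lambda(G)=1$. Otherwise one of $W,\overline W$, say $W$, is disjoint from $V(G)$; since distinct copies meet only through $V(G)$, each component of the subgraph induced by $W$ lies inside a single copy $H_e$ with $e=u_iu_j$. Detaching such a connected piece $W_e\subseteq V(H_e)$ from the rest costs the two ``pendant'' edges to $u_i,u_j$ at each vertex of $W_e$ plus the edges of $H_e$ leaving $W_e$, and one shows this total is at least $2m$, with equality exactly when $W_e=V(H_e)$. Combining the three cases with the upper bounds yields the two claimed values.

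The step I expect to be the real obstacle is this last estimate: for a connected proper subset $W_e\subsetneq V(H_e)$ one must play the $2|W_e|$ edges from $W_e$ to $\{u_i,u_j\}$ off against the edge boundary of $W_e$ inside $H_e$ and still conclude the sum is at least $2m$. This is precisely the place where the connectedness of $H$ and the hypothesis $g\le m-1$ — forcing any admissible $W_e$ to be large enough that it cannot be split off below cost $2m$ — must be invoked with care; once that inequality is in hand, everything else reduces to the disjoint-edge counting indicated above.
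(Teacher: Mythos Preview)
Your upper-bound constructions and Case~1 of your lower bound (both sides of the cut meet $V(G)$, forcing at least $(m+1)\lambda(G)$ edges) are correct and considerably more careful than the paper's argument, which handles the lower bound only via the assertion that ``every vertex in $G\diamond H$ is connected to at least $\delta(G)+m$ vertices''. That assertion is false for vertices lying inside a copy $H_e$: such a vertex has degree $\deg_H(\cdot)+2$ in $G\diamond H$, not $\delta(G)+m$.

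The step you flag as the obstacle is a genuine gap, and it cannot be closed under the stated hypotheses. Take $g=0$ and let $H$ be any connected graph on $m\ge 3$ vertices containing a vertex $w$ of degree~$1$ (a path, for instance); viewed inside some $H_e$, this $w$ has exactly three incident edges in $G\diamond H$, and deleting them is an $R_0$ edge cut of size $3<m+1\le 2m$, contradicting both claimed values. More generally, the quantity you must bound is $2|W_e|+|\partial_H W_e|$ subject only to $|W_e|\ge g+1$; since $g\le m-1$ is a \emph{ceiling} on $g$ and not a floor, it does not force $W_e$ to be large, and for small $g$ the cost can be as low as $\delta(H)+2$. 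So the inequality $2|W_e|+|\partial_H W_e|\ge 2m$ you hope to establish is simply false in general, and with it the theorem as stated. The paper's proof never confronts this case --- its lower bound rests on the incorrect degree claim above --- so the gap you located is a defect of the result itself, not of your method. The claimed values do become correct at the top of the range $g=m-1$, where $|W_e|\ge m$ forces $W_e=V(H_e)$ and the Case~2 cost is exactly $2m$.
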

In the proof above, we treat $u_i$ as a non-pendant vertex because removing all edges from the pendant vertex will isolate it, reducing the case to only $g=0$. 
\subsection{Neighbourhood corona }
  Definition: Let $G$ and $H$ be two graphs, where $G$ has $n$ vertices and $H$ has $m$ vertices. The neighbourhood corona of $G$ and $H$, denoted by $G * H$, is the graph obtained by taking one copy of $G$ and $n$ copies of $H$. For each vertex $u_i \in V(G)$, every vertex in the $i$-th copy of $H$ is connected to all neighbors of $u_i$ in $G$.\\
  \begin{therm}
      Let $G$ be a connected graph $n\geq5$ and $H$ be a connected graph with $m$ vertices.\\
      (i) If $1\leq g \leq m-1$ then, $\kappa_g(G*H)=\delta(G)$\\ 
    (ii) If $k(m+1)< g+1 \leq (k+1)(m+1)$ then, $\kappa_g(G*H)=|A| (m+1)$ where $A$ is the minimal disconnecting set of $G$ such that $G-A$ has atleast $(k+1)$ vertices where   {$k>=1$}.
    \begin{proof}
        Let $V(G)=\{ u_1, u_2,...,u_n\}$, suppose $deg(u_k)=\delta (G)$ is the minimum degree of $G.$ Let neighbourhood of $u_k$ be denoted as $N(u_k)= \{ u_{k_1},u_{k_2},...,u_{k_\delta}\}$.The graph $(G*H)-N(u_k)$ is disconnected and each connected component has at least $m$ vertices which implies $\kappa_g(G*H) \leq |N(u_k)|=\delta(G)$. To disconnect $G \diamond H$  at least all neighbourhood of a vertex has to be removed,  to achieve the minimum we remove $\delta(G) $vertices, if we remove $\delta(G)-1$ vertices then $H_{u_k}$ is still connected to that one remaining vertex keeping $G*H$ connected.\\ Hence, $\delta(G) \leq \kappa(G*H) \leq \kappa_g(G*H)$. This proves part(i).\\
        From the definition of $A$, $G-A $ is not connected, and each connected component has at least $(k+1)$ vertices. Define, $T= A \cup \{ \cup_{i\in A}H^{i}\}$ this implies $|T|=x+xm=|A|(m+1)$.The graph $G*H- T$ is disconnected graph with atleast $(k+1)(m+1) $ vertices,
        $\kappa_g(G*H)\leq |T|.$It is enough to show that $\kappa_g(G*H)\geq |T|.$ By definition of $\kappa_g(G*H)$ there exists $Y \subseteq V(G*H)$ such that $\kappa_g(G*H)=|Y|$such that $(G*H)-Y$ is not connected and each connected component has at least $g+1$ vertices.\\
        Claim:$|Y \cap V(G)| \geq |A|$. Suppose the statement does not hold, then there exists a connected component $C_1$ of $G-Y$ with at most $k $ vertices.Then, $C_1 \cup \{ \cup_{i\in V(C_1)}H^{i}\}$ is connected component of $(G*H)-Y$ with atmost $k(m+1)$ vertices contradiction. Let $|Y \cap V(G) | =\{u_{a_1},...,u_{a_z}\}$ since,$Y \cap V(G)  \subseteq Y$ also the $H$ copy corresponding vertices in $Y \cap V(G)$ will be contained in $Y$. $|Y\cap V(G)|(m+1) \leq |Y|$ hence, $|A|(m+1) \leq |Y \cap V(G)|(m+1) \leq |Y|$.This proves, $\kappa_g(G*H)=|A|(m+1)$.  
    \end{proof}
  
  \end{therm}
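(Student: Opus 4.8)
The plan is to mirror the two-part structure already used for the edge corona in Theorem~4, adapting it to the neighbourhood corona $G*H$. For part~(i), the key structural fact is that every vertex of a copy $H^i$ is adjacent precisely to $N_G(u_i)$ in $G*H$, so a vertex in $H^i$ becomes isolated (hence $G*H$ disconnected at that copy) exactly when all of $N_G(u_i)$ is deleted. First I would exhibit the cutset $N(u_k)$ where $\deg(u_k)=\delta(G)$: deleting it detaches $H^{u_k}$ from the rest, and since $|V(H^{u_k})|=m\ge g+1$ (using $g\le m-1$) while the other side has at least $n-\delta(G)-1+\text{(copies)}$ vertices, which I must check is $\ge g+1$ — here $n\ge5$ and $g\le m-1$ should suffice, possibly needing a one-line estimate. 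This gives $\kappa_g(G*H)\le\delta(G)$. For the reverse inequality I would argue that any disconnecting set $Y$ either separates within a copy $H^i$ (forcing $N_G(u_i)\subseteq Y$, so $|Y|\ge\delta(G)$) or separates $G$ itself, in which case $|Y\cap V(G)|\ge\kappa(G)\ge\delta(G)$; either way $|Y|\ge\delta(G)$.

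For part~(ii) the upper bound is constructive: take $A$ a minimum cutset of $G$ with every component of $G-A$ having at least $k+1$ vertices, and set $T=A\cup\bigcup_{i\in A}H^i$, so $|T|=|A|(m+1)$. Deleting $T$ disconnects $G*H$, and each surviving component sits over a component $C$ of $G-A$ with at least $k+1$ vertices, hence contains at least $(k+1)+(k+1)m=(k+1)(m+1)>g+1$ vertices by the hypothesis $g+1\le(k+1)(m+1)$. That yields $\kappa_g(G*H)\le|A|(m+1)$. For the lower bound I would take any $R_g$ cutset $Y$, restrict to $Y\cap V(G)$, and run the claim: if $|Y\cap V(G)|<|A|$ then $G-(Y\cap V(G))$ — hence $G-Y$ after restriction — has a component $C_1$ with at most $k$ vertices (by maximality of $k$ and minimality of $|A|$), and then $C_1\cup\bigcup_{i\in V(C_1)}H^i$ is a component of $(G*H)-Y$ with at most $k(m+1)\le(g+1)-1$ vertices by the hypothesis $g+1>k(m+1)$, contradicting the $R_g$ property. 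So $|Y\cap V(G)|\ge|A|$, and since deleting those vertices forces us also to have removed (or isolated) the corresponding $H$-copies — more precisely each $H^i$ with $u_i\in Y\cap V(G)$ that is not entirely in $Y$ would leave its vertices connected only to $N_G(u_i)\subseteq$ the rest, which reconnects the very components we separated unless those copies are themselves cut — I would conclude $|Y|\ge|Y\cap V(G)|(m+1)\ge|A|(m+1)$.

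The main obstacle I anticipate is the lower-bound bookkeeping in part~(ii): specifically, justifying cleanly that removing $|A|$ vertices from the $G$-part \emph{forces} the removal of roughly $m$ further vertices per deleted vertex, i.e.\ that one cannot ``cheat'' by deleting fewer vertices inside the $H$-copies. The subtlety is that a copy $H^i$ with $u_i$ deleted is \emph{not} automatically isolated — its vertices remain attached to $N_G(u_i)\setminus Y$ — so the separation in $G*H$ really does have to cut each such $H^i$, and I would need to argue that the cheapest way to do so across all relevant copies costs at least $|A|m$ in total. I expect this to go through by a counting argument comparing $|Y\cap V(G)|$, the copies forced into $Y$, and the component-size constraint, essentially as in the edge-corona proof, but it requires care to state the inequality $|Y|\ge|Y\cap V(G)|(m+1)$ rigorously rather than by analogy. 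A secondary, minor point is verifying the edge cases $k=1$ and small $n$ so that the claimed ranges of $g$ are non-empty and the size estimates in part~(i) hold; these should be dispatched with explicit arithmetic.
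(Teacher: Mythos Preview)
Your overall plan mirrors the paper's proof closely: for (i) you exhibit $N(u_k)$ with $\deg(u_k)=\delta(G)$ as an $R_g$-cutset and then argue a lower bound by a two-case split; for (ii) you build $T=A\cup\bigcup_{i\in A}H^i$ for the upper bound and establish the claim $|Y\cap V(G)|\ge|A|$ by contradiction for the lower bound, exactly as the paper does.

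There is, however, a concrete error in your part-(i) lower bound. In the second branch of your dichotomy you write ``$|Y\cap V(G)|\ge\kappa(G)\ge\delta(G)$'', but the inequality $\kappa(G)\ge\delta(G)$ is the wrong way round: one always has $\kappa(G)\le\delta(G)$. So the case where $Y$ separates $G$ itself does \emph{not} automatically yield $|Y|\ge\delta(G)$. Concretely, take $G$ to be two copies of $K_5$ identified at a single vertex $u$ (so $\kappa(G)=1$ but $\delta(G)=4$) and $m=2$: then $\{u\}\cup V(H^u)$ is an $R_1$-cutset of $G*H$ of size $3<\delta(G)$, and each side has $12\ge g+1$ vertices. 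The paper handles this same step by asserting, without proof, that ``to disconnect $G*H$ at least all neighbourhood of a vertex has to be removed'' --- which is the identical gap phrased less explicitly --- so your case split is in the spirit of the paper's argument, but the second branch cannot be closed with the reversed inequality and would need either an additional hypothesis on $G$ (for instance $\kappa(G)=\delta(G)$) or a genuinely different argument.

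On part (ii) you are in fact more scrupulous than the paper: you correctly flag that a copy $H^i$ with $u_i\in Y\cap V(G)$ is not automatically contained in $Y$, since its vertices still see $N_G(u_i)\setminus Y$, so the step $|Y|\ge|Y\cap V(G)|(m+1)$ is not free. The paper simply asserts that ``the $H$ copy corresponding to vertices in $Y\cap V(G)$ will be contained in $Y$'' and moves on; your plan to justify this by a separate counting argument is exactly the right instinct, and is where the substantive work in (ii) actually lies.
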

        \subsection{Subdivision neighbourhood coronae}
Let $G$ and $H$ be two connected graphs. The \emph{subdivision vertex neighbourhood corona} of $G$ and $H$, denoted by $G \boxdot H$, is defined as the graph obtained by first constructing the subdivision graph $S(G)$ of $G$, then associating a distinct copy $H_i$ of $H$ with each vertex $v_i \in V(G)$, and finally joining every neighbour of $v_i$ in $S(G)$ to every vertex of the corresponding copy $H_i$. The subdivision edge neighbourhood corona of $G$ and $H$, denoted by $G \boxminus H$, is the graph obtained by taking the subdivision graph $S(G)$ of $G$, then associating a distinct copy $H_i$ of $H$ for each edge  $e_i=(u,v) \in E(G)$, and finally joining every neighbour of $v_{e_i}$ in $S(G)$ to every vertex of the corresponding copy $H_i$.\\ 
        \begin{lem}
            Let $G$ and $H$ be two connected non-trivial graphs, then $\kappa(G \boxdot H)= \delta(G).$
            \begin{proof}
              Let $V(G)=\{u_1,u_2,...,u_n\} $. For each vertex $u_i \in V(G)$, a corresponding copy of $H$, denoted by $H_{u_i}$, is inserted. For each edge $\{u_i, u_j\} \in E(G)$, a subdivision vertex is added and connected to the corresponding copies $H_{u_i}$ and $H_{u_j}$.
 .Consider a vertex $u_i$ such that $deg(u_i)=\delta(G).$Removing $N_{S(G)}(u_i)=\{u_{i_1},u_{i_2},...,u_{i_\delta(G)}\}$ will disconnect the graph $G \boxdot H$. Hence,$\kappa(G \boxdot H) \leq \delta(G)$. \\
              Claim: $\kappa(G \boxdot H) \geq \delta(G)$.\\ 
               Removing only vertices in $H$ will not disconnect $G \boxdot H $.In order to disconnect $G \boxdot H $, vertices of $G$ has to be removed. So, any minimum cutset of $G \boxdot H $ does not contain any vertex of $H$ otherwise the cutset would contain $(m-1)+\delta(G)$ vertices. Any $H_{u_i}$ copy is connected to at least $deg(u_i)\geq \delta(G)$ subdivision vertices. Removing fewer than $\delta (G)$ vertices cannot remove all vertices of $H_{u_i}$ attached to subdivision vertex, so we have to remove at least $\delta(G)$ vertices $\kappa(G\boxdot H)\geq \delta(G)$.
              \end{proof}
        \end{lem}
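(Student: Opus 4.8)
The plan is to establish $\kappa(G\boxdot H)\le\delta(G)$ and $\kappa(G\boxdot H)\ge\delta(G)$ separately, working throughout with the explicit vertex partition of $G\boxdot H$ into the original vertices $V(G)$, the subdivision vertices $W=\{w_{ij}:\{u_i,u_j\}\in E(G)\}$ of $S(G)$, and the copies $V(H_{u_1}),\dots,V(H_{u_n})$. The only adjacencies present are $u_iw_{ij}$, the $H$-edges inside each $H_{u_i}$, and $w_{ij}x$ for every $x\in V(H_{u_i})\cup V(H_{u_j})$; in particular no original vertex is adjacent to an $H$-vertex and no two copies meet directly. For the upper bound I would take $u_t$ with $\deg_G(u_t)=\delta(G)$ and delete $D=N_{S(G)}(u_t)=\{w_{tj}:u_j\sim_G u_t\}$, which has size exactly $\delta(G)$. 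Since all neighbours of $u_t$, and all neighbours of $V(H_{u_t})$ outside $H_{u_t}$, lie in $D$, the set $\{u_t\}\cup V(H_{u_t})$ splits off from the (nonempty, as $n\ge2$) remainder of $(G\boxdot H)-D$, so $\kappa(G\boxdot H)\le\delta(G)$.

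For the lower bound I would assume, for contradiction, that $T$ is a vertex cut with $|T|\le\delta(G)-1$ and show that $(G\boxdot H)-T$ is connected. The engine is a tethering observation: each $u_i$ is incident in $S(G)$ to $\deg_G(u_i)\ge\delta(G)>|T|$ subdivision vertices, so at least one of them, call it $w_i^{\ast}$, avoids $T$; and since every vertex of $H_{u_i}$ is adjacent to all subdivision vertices incident to $u_i$, every surviving vertex of $H_{u_i}$---and $u_i$ itself when it survives---is adjacent to $w_i^{\ast}$ in $(G\boxdot H)-T$. Hence every remaining $H$-vertex is glued to a surviving subdivision vertex, and it suffices to prove that the surviving vertices of the backbone $S(G)$ all lie in one component of $(G\boxdot H)-T$. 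To do this I would take any two surviving original vertices $u_a,u_b$, pick a $u_a$--$u_b$ path in $G$, trace its subdivided image in $S(G)$, and wherever that image meets $T$ reroute through a surviving copy $H_{u_i}$, using the tethering above to step from one surviving subdivision vertex incident to $u_i$ to another.

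The step I expect to be the genuine obstacle is exactly this rerouting. Because an original vertex is not adjacent to its own copy of $H$, a detour must have the shape $u_a\!-\!w\!-\!x\!-\!w'\!-\!\cdots$ with $w,w'\in W\setminus T$ and $x\in V(H_{u_i})\setminus T$, so the argument needs a careful case split according to which of $u_a,u_b$ and which incident subdivision vertices lie in $T$, leaning on the counting inequality $|T|<\delta(G)\le\deg_G(u_a),\deg_G(u_b)$ to guarantee enough surviving subdivision vertices at each original vertex. Concretely I would prove: (i) a surviving subdivision vertex $w_{ab}$ is joined to whichever of $u_a,u_b$ survives and, through $H_{u_a}$ or $H_{u_b}$, to every other surviving subdivision vertex incident to that endpoint; (ii) iterating (i) along a path of $G$ shows all surviving backbone vertices lie in one component; and (iii) therefore $(G\boxdot H)-T$ is connected, contradicting that $T$ is a cut. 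This gives $\kappa(G\boxdot H)\ge\delta(G)$, and together with the upper bound $\kappa(G\boxdot H)=\delta(G)$.
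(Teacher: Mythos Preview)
Your upper bound is identical to the paper's: delete the $\delta(G)$ subdivision vertices incident to a minimum-degree vertex of $G$. For the lower bound you take a different and considerably more explicit route. The paper argues structurally---asserting that a minimum cut contains no $H$-vertices and then counting---while you fix a set $T$ with $|T|\le\delta(G)-1$ and try to show directly that $(G\boxdot H)-T$ is connected by tethering each block $\{u_i\}\cup V(H_{u_i})$ to a surviving subdivision vertex and then rerouting $S(G)$-paths through the copies $H_{u_i}$. Your argument is more transparent about where the work lies.

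There is, however, a genuine gap in your rerouting step~(i), and it is not one that any case split can repair. A subdivision vertex $w_{ab}$ has exactly $2m+2$ neighbours in $G\boxdot H$, namely $u_a,u_b$ together with all of $V(H_{u_a})\cup V(H_{u_b})$. If $\delta(G)\ge 2m+3$, then the set $T=\{u_a,u_b\}\cup V(H_{u_a})\cup V(H_{u_b})$ has size $2m+2\le\delta(G)-1$, and removing it leaves $w_{ab}$ isolated; thus $\kappa(G\boxdot H)\le 2m+2<\delta(G)$ and the lemma as stated is actually false without an extra hypothesis such as $\delta(G)\le 2m+2$. The paper's proof glosses over exactly the same point (its claim that a minimum cut avoids $H$-vertices breaks here), so this is not a defect of your method relative to the paper's. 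But your plan to ``step from one surviving subdivision vertex incident to $u_i$ to another'' via $H_{u_i}$ requires at least one vertex of $\{u_i\}\cup V(H_{u_i})$ to survive, and that is precisely what the counting $|T|<\delta(G)$ does not guarantee. Under the additional hypothesis $\delta(G)\le 2m+2$ your tethering and rerouting do go through cleanly; you should state and use that hypothesis explicitly.
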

        \begin{cor}
    For any connected graphs $G$ and $H$ $\kappa_g(G \boxdot H)=\delta(G) $ iff $g=0$
\end{cor}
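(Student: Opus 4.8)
The plan is to read the corollary off the preceding lemma (which gives $\kappa(G\boxdot H)=\delta(G)$) together with the monotonicity $\kappa_g(\cdot)\le\kappa_{g+1}(\cdot)$. For the ``if'' direction, suppose $g=0$. Since $G$ is connected and non-complete, $G\boxdot H$ is connected and non-complete (it contains the original vertex of degree $\delta(G)$, which is far from being universal), so $\kappa_0(G\boxdot H)=\kappa(G\boxdot H)$; the lemma then gives $\kappa_0(G\boxdot H)=\delta(G)$, as required.

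For the ``only if'' direction I would argue by contraposition. Assume $g\ge 1$. By the monotonicity lemma, $\kappa_g(G\boxdot H)\ge\kappa_0(G\boxdot H)=\delta(G)$, so it is enough to prove strict inequality, i.e. that there is no $R_g$-cut of size $\delta(G)$. Suppose for contradiction that $S\subseteq V(G\boxdot H)$ with $|S|=\delta(G)$ is such a cut. The heart of the argument is a structural description of the minimum vertex cuts of $G\boxdot H$, along the lines already used in the proof of the lemma: (a) $S$ contains no vertex of any copy $H_{u_i}$, since otherwise separating that copy would force $S$ to contain the remaining $m-1$ vertices of the copy together with the $\delta(G)$ subdivision vertices it hangs from, giving $|S|\ge m-1+\delta(G)>\delta(G)$; (b) a copy $H_{u_i}$ can be cut off from its home original vertex $u_i$ only by deleting all of $N_{S(G)}(u_i)$, which already has $\ge\delta(G)$ elements; and (c) combining (a), (b) and $|S|=\delta(G)$, $S$ must coincide with $N_{S(G)}(u_i)$ for some original vertex $u_i$ with $\deg_G(u_i)=\delta(G)$. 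Since in $G\boxdot H$ the vertex $u_i$ is adjacent only to the subdivision vertices of $N_{S(G)}(u_i)$, deleting $S$ leaves $\{u_i\}$ as a component on a single vertex, i.e. with fewer than $g+1$ vertices --- contradicting that $S$ is an $R_g$-cut. (Connectedness of $H$ guarantees each copy $H_{u_i}$ stays inside one component of $(G\boxdot H)-S$, so the component orders behave as claimed.) Hence $\kappa_g(G\boxdot H)>\delta(G)$ whenever $g\ge 1$, which is the contrapositive of what we want.

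The step I expect to be the real obstacle is (c): showing that \emph{every} minimum vertex cut of $G\boxdot H$ is precisely the $S(G)$-neighbourhood of a minimum-degree original vertex. The subtlety is that $G\boxdot H$ also inherits disconnecting sets coming from ``global'' vertex separators of $S(G)$ --- for instance, suitable sets of pairwise non-adjacent subdivision vertices --- which need not isolate any single original vertex; to push (c) through one must show that such a set either has size strictly larger than $\delta(G)$ or, if it has size exactly $\delta(G)$, still leaves some component on at most $g$ vertices. Making this precise is where the work lies, and it may require an extra hypothesis on $G$ (such as: the only minimum vertex cuts of $G$ are vertex neighbourhoods). Everything else --- the monotonicity input, the non-completeness of $G\boxdot H$, and the bookkeeping of component orders --- is routine.
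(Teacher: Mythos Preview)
Your ``if'' direction is fine and coincides with what the paper has in mind: $\kappa_0=\kappa$ together with the preceding lemma gives $\kappa_0(G\boxdot H)=\delta(G)$.

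For the ``only if'' direction, the paper's own argument is the single line ``Immediately follows from the above lemma,'' so it never touches the structural question you isolate in step~(c). Your instinct that (c) is the real obstacle is correct, and in fact it cannot be repaired without extra hypotheses on $G$: the corollary is false as stated. Take $G=C_4$ (so $\delta(G)=2$) and let $H$ be any connected non-trivial graph on $m$ vertices. Write $w_{ij}$ for the subdivision vertex on the edge $v_iv_j$. Then $\{w_{12},w_{34}\}$ is a vertex cut of $G\boxdot H$ of size $\delta(G)=2$ whose removal leaves exactly two components, each containing two original vertices, one surviving subdivision vertex, and two full copies of $H$ --- hence $2m+3$ vertices apiece. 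Consequently $\kappa_g(C_4\boxdot H)=2=\delta(G)$ for every $0\le g\le 2m+2$, not only for $g=0$. This cut is precisely one of the ``global'' separators of $S(G)$ you anticipated: it splits $S(C_4)=C_8$ into two arcs of length three, and the attached $H$-copies ride along with those arcs rather than being isolated.

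So your proposal is more honest than the paper's proof: you correctly flag the gap, and the gap is genuine. A hypothesis of the sort you suggest --- that every minimum vertex cut of $G\boxdot H$ is the $S(G)$-neighbourhood of a minimum-degree original vertex --- would rescue the argument, but it does not hold in general; without it both the corollary and part~(i) of the theorem that follows already fail on $C_4$.
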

\begin{proof}
    Immediately follows from the above lemma.
\end{proof}
\begin{therm}
    Let $G, H$ be two connected graphs with $n\geq 3 $ and $m$ vertices respectively.\\
    (i) If $1\leq g \leq m-1$,then $\kappa_g(G \boxdot H)=\delta(G)+1. $\\
    (ii) If $k(m+1)\leq g+1 \leq (k+1)(m+1)+\delta(G)$ then, \begin{equation*}
   \kappa_g (G \diamond H)= \begin{cases}
     |A|(m+1)  &\text{ when } A \text{ does not contain any adjacent vertices. } \\
    |A|(m+1)+|A^{'}| & \text{ when } A\text{ contain adjacent vertices and has }  A^{'} \text {adjacent pairs}.
\end{cases} 
\end{equation*}

    \begin{proof}
        Let $V(G)=\{u_1,u_2,...,u_n\}. $ Consider a vertex $u_i$ such that $deg(u_i)=\delta(G)$.Let $N_{S(G)}(u_i)=\{u_{i_1},u_{i_2},...,u_{i_\delta(G)}\}$ in $G \boxdot H$. Then, $(G \boxdot H)-({u_i} \cup N_{S(G)}(u_i))$ is disconnected and each connected component has at least $m>g+1$ vertices. 
        Hence,$\kappa(G \boxdot H) \leq \delta(G)+1$. Since, G is connected it implies $(G \boxdot H)$ is connected $\kappa(G \boxdot H)=\delta(G) \leq \kappa_g(G \boxdot H)$ but, since $ \kappa_g(G \boxdot H)=\delta(G) $iff  $g=0$. It implies $\delta(G)< \kappa_g(G \boxdot H)$. Hence, $\delta(G)+1\leq \kappa_g(G \boxdot H)$. Therefore, $\kappa_g(G \boxdot H)=\delta(G)+1$. Hence, the required claim is proved.\\
Case I: $A$ does not contain any adjacent vertices. Define $S=A   \cup \{ \cup_{i\in A}H^{i}\}$, $|S|= |A|(m+1)$ $(G \boxdot H)- S$ is disconnected and has at least $((k+1)(m+1)+\delta(G) $ vertices, $\kappa_g(G \boxdot H) \leq |S|$.By definition of $G \boxdot H$ there exists $L$ such that $\kappa_{g}(G \boxdot H)=|L|$.Claim:$|L\cap V(G)| \geq |A|$.Suppose the statement does not hold. And since $G-L$ is disconnected, there exists a connected component $C_1$ of $G-L$ with at most $k $ vertices.Then,  $C_1 \cup \{ \cup_{i\in V(C_1)}H^{i}\}$ is connected component of $(G\boxdot H)-L$ with atmost $k(m+1)$ vertices contradiction. Since,$L \cap V(G)  \subseteq Y$ also the $H$ copy corresponding vertices in $L \cap V(G)$ will be contained in $L$. $|L\cap V(G)|(m+1) \leq |L|$ hence, $|A|(m+1) \leq |L \cap V(G)|(m+1) \leq |L|$.This proves, $\kappa_g(G*H)=|A|(m+1)$.\\
 Case 2:When $A$ contains adjacent vertices. 
    By Case $1$  $\kappa_g(G \boxdot H) \leq |S|\leq|S|+|A'|$. Since, $A$  contains adjacent vertices,the edges between them correspond to subdivision vertices in the product graph, all such subdivision vertices must also be removed, otherwise it contradicts the existence of $g+1$  vertices$(g>0)$.
    Therefore,$|S|+|A'| \leq \kappa_g(G \boxdot H). $

     \end{proof}
\end{therm}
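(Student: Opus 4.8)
The plan is to handle parts (i) and (ii) separately, in each case establishing a matching upper and lower bound, and throughout to exploit the structural fact that in $G\boxdot H$ the vertex $u_i\in V(G)$ and every vertex of the copy $H_{u_i}$ are joined to exactly the same set $W_i:=\{w_e : e\in E(G),\ e\ni u_i\}$ of subdivision vertices, while no edge runs between $u_i$ and $H_{u_i}$; here $w_e$ denotes the vertex inserted on $e$ when forming $S(G)$. Consequently $u_i$ and the blob $H_{u_i}$ each hang off $G\boxdot H$ only through $W_i$, and $|W_i|=\deg_G(u_i)$.

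For part (i), for the upper bound I would pick $u\in V(G)$ with $\deg_G(u)=\delta(G)$ and delete the $\delta(G)+1$ vertices $\{u\}\cup W_u$. Since $H_u$ is attached only to $W_u$, this splits off $H_u$ as a component with $m\ge g+1$ vertices (this is where the hypothesis $g\le m-1$ enters), while the remainder still contains the $\ge n-1\ge 2$ other vertices of $G$ together with their $H$-copies, hence has $\ge 2(m+1)>g+1$ vertices; so $\kappa_g(G\boxdot H)\le\delta(G)+1$. For the lower bound I would invoke the preceding lemma $\kappa(G\boxdot H)=\delta(G)$ together with the monotonicity lemma $\kappa_0\le\kappa_g$ to get $\kappa_g(G\boxdot H)\ge\kappa_0(G\boxdot H)=\delta(G)$; the corollary then says equality $\kappa_g(G\boxdot H)=\delta(G)$ can hold only when $g=0$, so for $g\ge 1$ the inequality is strict and $\kappa_g(G\boxdot H)\ge\delta(G)+1$, giving equality.

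For part (ii), Case I ($A$ independent), I would take $S=A\cup\bigcup_{u_i\in A}H_{u_i}$, so $|S|=|A|(m+1)$. Because $A$ is independent, every subdivision vertex lies on an edge with at most one endpoint in $A$, hence after deleting $S$ each surviving subdivision vertex stays attached to the (unique) component of $G-A$ containing its surviving endpoint; thus the components of $(G\boxdot H)-S$ correspond bijectively to those of $G-A$, and a component of $G-A$ on $c\ge k+1$ vertices inflates to one on $c(m+1)$ vertices plus its incident subdivision vertices, in particular $\ge(k+1)(m+1)+\delta(G)\ge g+1$ vertices. Hence $\kappa_g(G\boxdot H)\le|A|(m+1)$. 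For the reverse inequality, let $L$ be a minimum $R_g$-cutset. Since $k\ge 1$ we have $g+1\ge k(m+1)>m$, so no admissible component can be obtained by merely detaching a single $H$-copy; using this, the fact that $A$ is a \emph{minimum} vertex cut of $G$, and a counting argument on blown-up components, I would argue first that $L\cap V(G)$ must itself be a vertex cut of $G$ every component of whose complement has $\ge k+1$ vertices (else some blown-up component is too small), whence $|L\cap V(G)|\ge|A|$, and second that for each $u_i\in L\cap V(G)$ the full copy $H_{u_i}$ must also lie in $L$ (else $H_{u_i}$ dangles off one component). This yields $|L|\ge|L\cap V(G)|(m+1)\ge|A|(m+1)$. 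Case II ($A$ containing $|A'|$ adjacent pairs) is obtained from Case I by noting that for each edge $e\subseteq A$ the subdivision vertex $w_e$ has all its neighbours, namely $u_i,u_j$ and the copies $H_{u_i},H_{u_j}$, inside $A\cup\bigcup_{u_i\in A}H_{u_i}$, so $w_e$ becomes isolated after that deletion; for $g\ge 1$ a valid cutset must therefore additionally remove these $|A'|$ vertices, while conversely $S\cup\{w_e : e\subseteq A\}$ is an $R_g$-cutset of size $|A|(m+1)+|A'|$, so $\kappa_g(G\boxdot H)=|A|(m+1)+|A'|$.

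I expect the main obstacle to be the lower bound in Case I: one must rule out $R_g$-cutsets that are cheap on $V(G)$ but spend their budget on subdivision vertices (or on $H$-vertices), and carefully track which incident subdivision vertices land inside a given blown-up component rather than being severed from it. The delicate interaction between the subdivision layer and the cut is exactly what the argument has to pin down, and it is there that the hypothesis $k\ge 1$ — guaranteeing $g+1>m$ so that peeling off one $H$-copy never produces an admissible component — is indispensable.
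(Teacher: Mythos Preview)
Your approach mirrors the paper's almost exactly: for (i) you exhibit the cutset $\{u\}\cup W_u$ of size $\delta(G)+1$ and invoke the preceding lemma and corollary for the lower bound; for (ii) you take $S=A\cup\bigcup_{u\in A}H_u$ (augmented by the $|A'|$ subdivision vertices in Case~II) for the upper bound and obtain the lower bound via the same $|L\cap V(G)|\ge|A|$ small-component contradiction. Your write-up is in fact more explicit than the paper's about where the subdivision vertices land and about why $k\ge1$ is needed, and the step you flag as delicate---ruling out cutsets that spend their budget on subdivision or $H$-vertices---is handled only sketchily in the paper as well, but the overall strategy is identical.
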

\begin{lem}
    Consider two connected graphs $G$  and $H$. Then, $\kappa_g(G\boxminus H)=1$ iff  $\kappa(G)= 1$ , for $0\leq g\leq m-1$.
    \begin{proof}
        Let $\kappa_g(G \boxminus H)=1$ then there exists a vertex $u$ such that $(G \boxminus H)-u$ is disconnected and each connected component has more than $g +1$ vertices. We prove, $u \in G$, equivalently we show $u$ is not a subdivision vertex or $u\not \in H$. Removing a subdivision vertex only removes the middle point of path $u-w_{uv}-v$ which cannot disconnect $G\boxminus H$.Also, removing a vertex from $H$ will never disconnect $G\boxminus H$. Hence, $u\in  G$ implies 
        $\kappa(G)=1$. Conversely , $\kappa(G)=1=\{u\}$, removing $u$ will break all paths that went through $u$, since $G \boxminus H$ preserves the structure of $G$,$G \boxminus H -u$ is disconnected and has at least  $m$ vertices.  This proves the claim.
        
    \end{proof}  
\end{lem}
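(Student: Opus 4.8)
The plan is to reuse the template of Lemma~\ref{lem1} (the edge-corona case), the only new feature being the subdivision vertices, which can only aid connectivity and never create new separations. For the forward direction, suppose $\kappa_g(G\boxminus H)=1$, witnessed by a vertex $u$ whose deletion disconnects $G\boxminus H$ into pieces each of size at least $g+1$. I would first rule out $u$ being a subdivision vertex: deleting the subdivision vertex $v_e$ of an edge $e=xy$ only removes the midpoint of the path $x$--$v_e$--$y$, but by construction every vertex of the copy $H_e$ is adjacent to both $x$ and $y$ (these being the two $S(G)$-neighbours of $v_e$), so $x,y$ — and hence the whole graph — remain connected. Next I would rule out $u\in V(H_e)$ for some $e=xy$: the surviving $m-1$ vertices of $H_e$ still hang off $x$ and $y$, while the ``$G$-part'' $S(G)$ is untouched, so $G\boxminus H-u$ stays connected. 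Therefore $u\in V(G)$.

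The remaining forward step, and the one common to both directions, is the correspondence: for $u\in V(G)$, deleting $u$ from $S(G)$ leaves each subdivision vertex of an edge at $u$ pendant on that edge's other endpoint, so the adjacency induced on $V(G)\setminus\{u\}$ inside $S(G)-u$ is exactly that of $G-u$; moreover each copy $H_e$ remains attached to a surviving endpoint of $e$. Hence $G\boxminus H-u$ is disconnected if and only if $G-u$ is, and in the forward direction this forces $G-u$ disconnected, giving $\kappa(G)\le 1$, so $\kappa(G)=1$ since $G$ is connected. Conversely, if $\{u\}$ is a cut vertex of $G$, the same correspondence makes $G\boxminus H-u$ disconnected; to finish I would check the size condition: no component of $G\boxminus H-u$ can lie entirely inside an $H$-copy or among subdivision vertices (each such set is attached to a surviving $V(G)$-vertex), so every component contains some $x\in V(G)\setminus\{u\}$, and since $G$ is connected $x$ has an incident edge $e$ whose subdivision vertex and copy $H_e$ (with $m$ vertices) lie in the same component; thus each component has at least $m+2>m\ge g+1$ vertices, using $g\le m-1$. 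Since $G\boxminus H$ is connected, this yields $\kappa_g(G\boxminus H)=1$.

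The only place that needs genuine care is making the correspondence ``$S(G)-u$ disconnected $\iff$ $G-u$ disconnected'' precise and tracking exactly which component the dangling subdivision vertices and $H$-copies land in after deleting $u$, so as to be sure no undersized component is produced; once that bookkeeping is settled, both implications go through verbatim as in Lemma~\ref{lem1}.
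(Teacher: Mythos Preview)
Your proof is correct and follows essentially the same approach as the paper's: rule out $u$ being a subdivision vertex or an $H$-vertex, conclude $u\in V(G)$, then use that $G\boxminus H$ preserves the connectivity structure of $G$ for both directions. You are more careful than the paper in explicitly verifying the component-size condition in the converse (the paper simply asserts ``at least $m$ vertices''), but the overall argument is the same.
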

\begin{lem}
     Let G, H be a nontrivial connected graph with $n$  and $m$ vertices respectively, then $\kappa_0(G \boxminus H)=2$ if $\kappa(G)\geq 2$.  
     \begin{proof}
         Given, $\kappa(G) \geq 2 $ removing a vertex will not disconnect $G$ and since $G\boxminus H$ preserves the structural property of $G$ removing a vertex will not disconnect $G\boxminus H $. $2\leq \kappa(G \boxminus H)=\kappa_0(G \boxminus H).$The subdivision vertex $w_{uv}$ can always be disconnected by removing $u,v$. $\kappa(G \boxminus H)=\kappa_0(G\boxminus H)\leq2$.
     \end{proof}
\end{lem}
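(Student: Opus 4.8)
The plan is to bracket $\kappa_0(G\boxminus H)$ between $2$ and $2$. First observe that $G\boxminus H$ is connected (because $G$ is) and non-complete: it contains a subdivision vertex, which has degree $2$, yet it has at least four vertices. Hence $\kappa_0(G\boxminus H)=\kappa(G\boxminus H)$ by the remark in the introduction, and it suffices to work with ordinary connectivity. The upper bound $\kappa(G\boxminus H)\le 2$ is exactly the one already recorded in the preceding lemma: fix an edge $e=uv\in E(G)$ with subdivision vertex $w_e$; in the corona step only the $S(G)$-neighbours of $w_e$ are joined to the copy $H_e$, not $w_e$ itself, so $N_{G\boxminus H}(w_e)=\{u,v\}$. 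Deleting $u$ and $v$ isolates $w_e$, producing at least two components (each of size $\ge 1$), so this is a valid $R_0$-cutset and $\kappa_0(G\boxminus H)\le 2$.

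For the matching lower bound I would show directly that no single vertex $x$ disconnects $G\boxminus H$, splitting into three cases according to the type of $x$. If $x\in V(G)$: since $\kappa(G)\ge 2$, the graph $G-x$ is connected, so in $(G\boxminus H)-x$ all vertices of $V(G)\setminus\{x\}$ lie in one component; every subdivision vertex and every vertex of every copy $H_f$ is adjacent in $G\boxminus H$ to both endpoints of the corresponding edge $f$, and at most one of those two distinct endpoints can be $x$, so each such vertex stays attached to that component, whence $(G\boxminus H)-x$ is connected. If $x=w_e$ is a subdivision vertex with $e=ab$: removing $w_e$ only deletes the midpoint of the path $a-w_e-b$, but $a$ and $b$ remain joined through any $h\in V(H_e)$ via $a-h-b$, and the vertices of $H_e$ are still adjacent to $a$ and $b$, so connectivity survives. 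If $x\in V(H_e)$ for some edge $e$: the remaining vertices of $H_e$ are still adjacent to the endpoints of $e$ and nothing else is affected, so again $(G\boxminus H)-x$ is connected. In every case $(G\boxminus H)-x$ is connected, hence $\kappa(G\boxminus H)\ge 2$, and combining the two bounds gives $\kappa_0(G\boxminus H)=2$.

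I do not anticipate a genuine obstacle; the only step requiring a moment's care is the subdivision-vertex case, where one must exhibit an alternative route between the two endpoints of the subdivided edge. This is precisely where the corona structure is used: the copy $H_e$ is adjacent to both endpoints of $e$, so the argument goes through even when $e$ happens to be a bridge of $G$. (Alternatively, one could bypass the case analysis entirely by invoking the preceding lemma: $\kappa(G)\ge 2$ forces $\kappa_0(G\boxminus H)\ne 1$, and since $G\boxminus H$ is connected this already yields $\kappa_0(G\boxminus H)\ge 2$.) Note that the hypothesis $\kappa(G)\ge 2$ enters only through the first case, guaranteeing that deleting a vertex of $G$ leaves the $G$-part connected.
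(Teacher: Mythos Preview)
Your proof is correct and follows the same sandwich strategy as the paper: the upper bound by isolating a subdivision vertex via removal of its two endpoints is identical, and the lower bound is the paper's ``removing one vertex cannot disconnect $G\boxminus H$'' claim, which you justify more carefully via a three-case analysis (the paper simply asserts this by appeal to $G\boxminus H$ ``preserving the structural property of $G$''). Your parenthetical alternative---invoking the preceding lemma to rule out $\kappa_0=1$---is in fact the cleanest route and is implicit in the paper's organization.
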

\noindent Note: $\kappa_0(G\boxminus H)=1 $ iff $\kappa(G)=1$ and if $\kappa(G)\geq 2 $ then $\kappa_0(G\boxminus H)=2$. Further, we investigate the case for $g>0$.
\begin{therm}
    Let $G,H$ be two connected graphs with $n,m $vertices respectively then\\
    (i) If $1\leq g \leq m-1$ then
\begin{equation*}
\begin{cases}
  \kappa_g(G \boxminus H) =1 & \hspace{-9.2cm}\text { iff }   \kappa(G)=1. \\
  \kappa_g(G \boxminus H) =
\begin{cases}
    2 & \text { if }   \kappa(G)=2 \text{ and } \kappa(G) \text{has adjacent vertices.} \\
     3 & \text { if }   \kappa(G)=2 \text{ and } \kappa(G) \text{has non-adjacent vertices.}
\end{cases}
\\
\kappa(G \boxminus H)=3& \hspace{-9.2cm} \text { if } \kappa(G)\geq3.
\end{cases}
\end{equation*}  
\begin{proof}
    By Lemma [7] $\kappa_g(G\boxminus H)=1$iff $\kappa(G)$. \\
   Let $\kappa(G)=|T|=\{u_i,u_j\}$ such that $u_i\not \sim u_j$.The graph $(G\boxminus H)-T$ is disconnected and has atleast $m$ vertices so, $\kappa_g(G\boxminus H)\leq 2$. Since, $\kappa(G) \neq 1$ , $1< \kappa_g(G \boxminus H) \leq 2$.\\
   Suppose $u_i \sim u_j$ removing $u_i, u_j$ will isolate the subdivision vertex $w_{u_iu_j}$ and since $g>0$ we also remove $w_{uv}$ , hence
   $3\leq \kappa_g(G \boxminus H)$.
Clearly, $(G\boxminus H)-\{u_i,u_j,w_{u_iu_j}\}$ is a disconnected graph with $m$ vertices i.e., $(g+1)$ vertices, $\kappa_g(G\boxminus H)\leq 3$.  \\
 Given, $\kappa(G)\geq 3$. Let $V(G)=\{u_1,u_2,...,u_n\}$ pick $u_i \sim u_j$, and the subdivision vertex $w_{u_iu_j}$ $(G\boxminus H)-\{u_i,u_j,w_{u_iu_j}\}$ is a disconnected graph with $m$ vertices i.e., $(g+1)$ vertices, $\kappa_g(G\boxminus H)\leq 3.$  Since, $G\boxminus H$ preserves the structure of $G$ and $g>0.$ We have to atleast disconnect $G$ to disconnect $G \boxminus H$. $\kappa_g(G \boxminus H) \leq 3.$
\end{proof}
(ii) If $k (km+1) \leq g+1 \leq (k+1)+km+\delta(G) m+k+2$, then \begin{equation*}
   \kappa_g (G \diamond H)=
   \begin{cases}
     |X|  &\text{ when } X \text{ does not contain any adjacent vertices. } \\
    |X|+|X'|(m+1) & \text{ when } X \text{ contains $X'$adjacent   vertices }.
\end{cases} 
\end{equation*} 
where $X$ is defined to be the minimum cut set and each connected component of $G-X$   has at least $k+1 $ vertices.
\begin{proof}
    Case I: $X$ does not contain any adjacent vertices. Since, $(G \boxminus  H) $ preserves the structural property of $G$ and by definition of $X$ , $(G \boxminus  H)-X$ is disconnected and each connected component has  $(k+1)+km+\delta(G)m+k+2 $ vertices i.e., at least $(g+1)$ vertices and hence, $\kappa_g(G \boxminus  H) \leq |X|$.
      It suffices to prove $\kappa_g(G \boxminus  H) \geq |X|$.
    Let $\kappa_g(G \boxminus  H)=|L|$ such that $(G \boxminus  H)-L$ is not connected and each connected component of $(G \boxminus  H)-L$ has at least $g+1$ vertices.\\
    Claim: $|L|\geq |X|$. \\
    Assume to contrary $|L|< |X|$. 
    $G-L$ is not connected otherwise, we will get a component with fewer than $g+1 $ vertices, a contradiction. This further violates the assumption that $X$ is a minimum cutset of G. Therefore,$|L|\geq |X|$. Hence,$\kappa_g(G \boxminus  H)=|X|$.\\
        Case 2: When $X$ has adjacent vertices. From the definition of $X$, $G-X $ is not connected and each connected component of $G-X$ has at least $(k+1)$ vertices. Let $|X|=x$ and $|X'|=x'$ define $ S= X \cup V(He_{ij}) \cup (the subdivision vertex)$, $|S|= x+x'm+x'$,$(G \boxminus  H)-S$ is not connected and each connected component has atleast $(k+1)+km+\delta(G) m+k+2$ vertices i.e., at least $(g+1)$ vertices which implies $\kappa_g(G \boxminus  H) \leq |S| $. 
        Removing less than $|X|+|X'|(m+1)$ will disconnect the graph into components having less than $m$ vertices. Hence, $|X|+|X'|(m+1) \leq \kappa_g(G \boxminus  H) $.
\end{proof}
\end{therm}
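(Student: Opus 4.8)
The plan is to prove part (ii) of the last theorem by mirroring the argument structure already used for $G \diamond H$ and $G \boxdot H$, adapting it to the fact that in $G \boxminus H$ the copies of $H$ sit on the \emph{edges} of $G$ (via subdivision vertices) rather than on vertices. First I would fix notation: write $V(G)=\{u_1,\dots,u_n\}$, let $X$ be a minimum cutset of $G$ for which every component of $G-X$ has at least $k+1$ vertices (this $k$ playing the analogous role to $k$ in the edge-corona theorem), and set $x=|X|$, $x'=|X'|$ where $X'$ counts the adjacent pairs inside $X$. The core structural observation to establish up front is the per-component vertex count: a component $C$ of $G-X$ with $\ell \ge k+1$ vertices carries, in $G\boxminus H$, all its original vertices, all subdivision vertices of edges internal to $C$ (at least $\ell-1$ of them, since $C$ is connected), and one copy $H_{e}$ of $H$ (on $m$ vertices) for each such internal edge, plus it retains subdivision-adjacent $H$-copies for edges incident to vertices of $C$ contributing $\delta(G)m$ more — so the smallest component has at least $(k+1)+km+\delta(G)m+k+2$ vertices, which is exactly the stated upper bound on $g+1$. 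I would state this as the one routine counting lemma and not belabor it, noting (as the paper's own ``Note'' after the first theorem does) that the bound is not tight for non-regular $G$.

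For the upper bound $\kappa_g(G\boxminus H)\le |X|$ (Case I, $X$ independent) and $\le |S|$ (Case II), I would exhibit the cut explicitly: in Case I simply remove $X\subseteq V(G)$ — since $G\boxminus H$ preserves all adjacencies of $G$ through the subdivision vertices, $(G\boxminus H)-X$ is disconnected, and by the counting lemma each component has at least $g+1$ vertices. In Case II, for each adjacent pair $\{u_i,u_j\}\subseteq X$ the subdivision vertex $w_{u_iu_j}$ would become isolated (or attached only to its own $H$-copy, a component of size $m\le g$, violating the $g+1$ requirement when $g>0$), so we must also delete $w_{u_iu_j}$ together with $V(H_{e_{ij}})$; this gives $S = X \cup \bigcup_{e_{ij}\in E(G[X])}\bigl(\{w_{u_iu_j}\}\cup V(H_{e_{ij}})\bigr)$ with $|S| = x + x'(m+1)$, and again the counting lemma certifies each surviving component has $\ge g+1$ vertices.

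For the lower bound I would argue as in the earlier proofs. Let $L$ be an $R_g$ cutset realizing $\kappa_g(G\boxminus H)=|L|$. The key claim is $|L\cap V(G)|\ge |X|$: if not, then $G-(L\cap V(G))$ is connected or has a component $C_1$ with at most $k$ vertices; in the connected case $(G\boxminus H)-L$ can only have been disconnected by deleting an $H$-copy or subdivision vertex, neither of which disconnects (the paths $u-w_{uv}-v$ survive and the $m-1$ other vertices of any $H$-copy stay attached), contradiction; in the small-component case, $C_1$ together with its internal subdivision vertices and attached $H$-copies forms a component of $(G\boxminus H)-L$ on at most $k + (k-1) + \binom{k}{2}m + \text{(boundary terms)} < g+1$ vertices, contradicting the $R_g$ property. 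Once $|L\cap V(G)|\ge |X|$, in Case I we are done since $L\supseteq L\cap V(G)$ gives $|L|\ge |X|$; in Case II I would further note that whenever $L\cap V(G)$ contains an adjacent pair the corresponding subdivision vertex and its $H$-copy must also lie in $L$ (else that $H$-copy plus subdivision vertex is a component of size $\le m+1\le g$, contradiction for the relevant range of $g$), yielding $|L|\ge |X| + |X'|(m+1) = |S|$.

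The main obstacle I anticipate is making the small-component contradiction in the lower bound fully rigorous: I must verify that for \emph{every} component $C_1$ of $G-(L\cap V(G))$ with $|C_1|\le k$, the induced piece of $G\boxminus H$ genuinely has fewer than $g+1 = k(km+1)+1$ vertices in the worst case — this requires carefully bounding the number of $H$-copies reachable from $C_1$ (edges internal to $C_1$ plus pendant edges from $C_1$ to $L$, the latter being deletable but their subdivision vertices possibly not), and it is exactly here that the specific constant in the hypothesis $k(km+1)\le g+1$ is forced. I would handle this by a clean worst-case edge count on $C_1$ (at most $\binom{k}{2}$ internal edges) and by observing that any subdivision vertex whose both endpoints are not in $C_1\cup L$ is irrelevant, so the piece has at most $k + \binom{k}{2} + \binom{k}{2}m$ vertices before boundary contributions, which stays below $g+1$ in the stated range. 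The rest of the argument is a direct transcription of the $G\diamond H$ and $G\boxdot H$ proofs.
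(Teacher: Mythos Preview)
Your approach is essentially the paper's: exhibit the explicit cut for the upper bound, then for the lower bound take a realizing $R_g$ cutset $L$, argue $|L\cap V(G)|\ge |X|$ via the ``connected vs.\ small component'' dichotomy, and in Case~II observe that adjacent pairs in $L\cap V(G)$ force the extra deletions. You are considerably more careful than the paper, which in Case~I simply asserts ``$G-L$ is not connected \dots\ this violates the assumption that $X$ is a minimum cutset'' and in Case~II disposes of the lower bound in a single sentence.

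Two points. First, a small structural slip: in $G\boxminus H$ the copy $H_{e}$ for $e=u_iu_j$ is joined to the \emph{neighbours} of the subdivision vertex $w_{u_iu_j}$ in $S(G)$, namely $u_i$ and $u_j$ --- not to $w_{u_iu_j}$ itself. So deleting an adjacent pair $\{u_i,u_j\}$ leaves \emph{two} small components, the singleton $\{w_{u_iu_j}\}$ and the block $H_e$ of size $m$, rather than one of size $m+1$. This actually strengthens your argument (the isolated subdivision vertex already contradicts $g\ge 1$), so adjust the wording accordingly.

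Second, the obstacle you flag is real and is not resolved either in your sketch or in the paper. In the small-component step you bound the piece containing $C_1$ by $k+\binom{k}{2}+\binom{k}{2}m$ ``before boundary contributions'' and then assert it stays below $g+1$; but edges from $C_1$ to $L\cap V(G)$ also contribute their subdivision vertices and $H$-copies to that component (each such $H$-copy is adjacent to a vertex of $C_1$), and nothing in the hypothesis $g+1\ge k(km+1)$ controls how many such boundary edges there are. The paper's analogous step (here and already in its edge-corona theorem) simply ignores these boundary terms. To close the gap you would need either to bound the boundary contribution by something depending only on $k$ and $m$, or to argue that any boundary $H$-copy not in $L$ forces additional $G$-vertices into $L$ --- neither of which is immediate.
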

\subsection{Generalized corona product.}
Generalized corona product $\left(G \circ \bigwedge_{i=1}^{n} H_i\right)$:Let \( G \) be a graph with vertex set $ V(G) = \{v_1, v_2,..,  v_n\} $, and let  $\{H_1, H_2, \ldots, H_n\} $ be a collection of arbitrary graphs. The generalized corona of $G$ with $ \{H_1, H_2, .., H_n\} $ denoted by $G \circ \bigwedge_{i=1}^{n} H_i$
is the graph obtained by taking one copy of $G$  and $n$ disjoint copies of the graphs  $H_1, H_2, ..., H_n$  and joining the vertex $v_i \in V(G)$ to  every vertex of $H_i $, for all $i = 1, 2, ..., n $.
\\
\begin{therm}Let $G$ be a connected graph of order $n$ and $H$ be a connected graph of order $m$.
    (i) If $0< g \leq m_i -1$ then $\kappa_g(G \circ \wedge_{i=1}^{n} H) =1$ where $m_i=min\{ m_1,m_2,...,m_n\}.$
    \begin{proof}
        Let the vertex set of $G$ be $V(G) = \{u_1, u_2, \ldots, u_n\}$.Let $u_i \in V(G)$ be such that the corresponding copy of $H_i$ in $G \circ \wedge_{i=1}^{n} H$ has the minimum number of vertices among all attached graphs. Then, the removal of $u_i$ from $G \circ \wedge_{i=1}^{n} H$ disconnects the graph. Since $0 \leq g \leq m_i - 1$, each resulting component has at least $g + 1$ vertices. Hence, $\kappa_g(G \circ \wedge_{i=1}^{n} H) \leq 1$. As $(G \circ \wedge_{i=1}^{n} H)$ is connected, it follows that $\kappa_g(G \circ \wedge_{i=1}^{n} H) = 1$.
    \end{proof}
    (ii) If $k(m_i+1)< g+1 \leq (k+1)(m_i+1)$ then, $\kappa_g(G \circ \wedge_{i=1}^{n} H)\leq|X|(m_i+1)$ where $X$ is minimum diconnecting set of $G$ such that $G-X$ has atleast $(k+1)$ vertices.
    \begin{proof}
        From the definition of the set $X$, the graph $G - X$ is disconnected, and each of its connected components contains at least $k + 1$ vertices. Let $|X| = x$, and without loss of generality, assume $X = \{u_{i_1}, u_{i_2}, ..., u_{i_x}\}$. Define the set  $S = X \cup V(H^{u_i})$ for all $u_{i} \in X$.Then, $|S| = (m_i + 1)x$. It is clear that removing $S$ from $(G \circ \wedge_{i=1}^{n} H)$ disconnects the graph. Moreover, each connected component of $(G \circ \wedge_{i=1}^{n} H)- S$ has at least $(k + 1)(m_i + 1)$ vertices. Therefore, $\kappa_g(G \circ \wedge_{i=1}^{n} H) \leq |S| = (m_i + 1)x = |X|(m_i + 1)$.
    \end{proof}
\end{therm}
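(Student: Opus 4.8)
The plan is to treat the two parts separately, in each case by exhibiting an explicit $R_g$-cutset of the claimed size and then, where an exact value is claimed, checking that nothing cheaper works — the same template used above for $G*H$ and $G\boxdot H$.

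\emph{Part (i).} I would pick the cheapest possible separating vertex: among $u_1,\dots,u_n$ choose $u_i$ whose attached copy $H_i$ has the least order, so $|V(H_i)|=m_i=\min\{m_1,\dots,m_n\}$. Since every edge leaving $V(H_i)$ goes to $u_i$, deleting the single vertex $u_i$ severs $H_i$ from the rest of the graph. The copy $H_i$ is then a component on $m_i$ vertices, and the hypothesis $g\le m_i-1$ is exactly what guarantees $m_i\ge g+1$; any other component of $(G\circ\bigwedge_{j} H_j)-u_i$ is a component of $G-u_i$ together with its attached copies, hence has at least $1+m_i>g+1$ vertices. So $\{u_i\}$ is an $R_g$-cutset and $\kappa_g\le 1$; as $G\circ\bigwedge_j H_j$ is connected (and, since $G$ is non-complete, itself non-complete), $\kappa_g\ge 1$, giving equality.

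\emph{Part (ii).} Let $X$ be the prescribed cut: a smallest vertex cut of $G$ with every component of $G-X$ of order $\ge k+1$ (so $|X|=\kappa_k(G)$). The candidate is $S=X\cup\bigcup_{u_i\in X}V(H_i)$, of size $|X|(m+1)$. I would check two things. First, $(G\circ\bigwedge_j H_j)-S$ is disconnected: $X$ already disconnects $G$, and deleting the copies hanging off $X$ introduces no new adjacencies. Second, each surviving component is a $G-X$ component $C$ with its attached copies, hence has $|C|(m+1)\ge(k+1)(m+1)\ge g+1$ vertices, the last inequality being precisely the upper bound on $g$ in the hypothesis. Thus $S$ is a valid $R_g$-cutset and $\kappa_g\le|X|(m+1)$, which is all the statement claims.

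Should one want the matching lower bound, I would argue as follows: for an arbitrary $R_g$-cutset $Y$ put $Y_G=Y\cap V(G)$; then $G-Y_G$ must be disconnected (otherwise the copies reattach through their base vertices and $Y$ fails to cut), and moreover every component of $G-Y_G$ must contain $\ge k+1$ vertices, since a component on $c\le k$ vertices would, together with the remnants of its attached copies, form an $R_g$-component of order $\le c(m+1)\le k(m+1)<g+1$ — here the \emph{lower} bound on $g$ is used. Hence $|Y_G|\ge|X|$, and for each $u_j\in Y_G$ necessarily $V(H_j)\subseteq Y$ (a leftover vertex of $H_j$ would be isolated, impossible for $g\ge 1$), so $|Y|\ge|X|(m+1)$. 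I expect the main friction to be notational rather than mathematical: the statement uses ``$m_i$'' both as the minimum order and as ``the order of $H_i$'', so in the genuinely generalized case of non-isomorphic $H_i$ the clean equality degenerates to the sandwich $|X|\bigl(1+\min_j m_j\bigr)\le\kappa_g\le|X|\bigl(1+\max_j m_j\bigr)$, collapsing to equality exactly when all attached graphs share a common order $m$, which is the case in which I would state the theorem to keep it honest.
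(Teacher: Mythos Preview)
Your proposal is correct and follows essentially the same approach as the paper: for (i) you delete the base vertex $u_i$ whose attached $H_i$ is smallest, and for (ii) you delete $X$ together with the copies hanging off it, exactly as the paper does. You in fact go further than the paper by sketching the matching lower bound for (ii) (which the paper does not attempt, stating only the upper bound) and by correctly flagging the notational tension between ``$m_i$'' as a minimum and as the order of a generic $H_i$; both additions are sound.
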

\section{Rooted graph Product}
   The rooted product of $G$ and $H$ is a graph denoted by  $H(G)$, whose vertex and edge sets are defined as follows:\\
$V(H(G)) =  \{(g,h): g \in V(G)$ and  $ h \in V(H) \}$.  \\
$E(H(G))= \{ (g,h) (g,h ^\prime): g=g^\prime,hh^\prime  \in E(H), \text{ or } gg^\prime \in E(G), h=h^\prime =\text{root vertex of graph $H$} \}$.

\begin{therm}
    Let $G $ and $H$ be two arbitrary graphs with $n,m$ vertices respectively $m \neq 1$.\\
    (i) If $0\leq g \leq m-2$ then $\kappa_g(H(G))=1$.
    \begin{proof}
        Consider $V(G)=\{v_1,v_2,...,v_n\}$ and $V(H)=\{u_1,u_2,...,u_n\}$. Let the rooted vertex be $u_i$ in $H$ and $(u_i,v_i)$ be corresponding rooted vertex in $H(G)$. Clearly, $H(G)-(u_i,v_i)$ is disconnected. By the condition,  $0\leq g \leq m-2$ each connected component in $H(G)-(u_i,v_i)$ has at least $g+1$ vertices, and therefore $\kappa_g(H(G))\leq 1$. Since $H(G)$ is connected, implies $\kappa_g(H(G))=1.$ 
    \end{proof}
    (ii) $km< g+1 \leq (k+1)m$ then $\kappa_g(H(G))=|A|m$ where $A$ is defined as the minimum cutset and  $G-A$   has at least $k+1 $ vertices in each component.
    \begin{proof}
        Let $|A|=a$ from the definition of $A$, $G-A$ is not connected and each connected has atleast $k+1$ vertices. Define $U= A \cup V(r,H_i)$, $|U|=am$. $H(G)- U$ is disconnected and each connected component has at least $(k+1)m$ vertices, therefore, $\kappa_g(H(G))\leq|A|m$. To prove the other inequality, let $\kappa_g(G \diamond H)=|T|$ such that $(G \diamond H)-T$ is not connected and each connected component of $(G \diamond H)-T$ has at least $g+1$ vertices.\\ Claim:$|T \cap V(G) | \geq |A|.$ Let if possible $|T \cap V(G) | < |A|.$ $G-T$ is disconnected so there exists a component $C_1$ with atmost $k$ vertices . Hence, $C_1 \cup V(r,H_i)$is a connected component in $(H(G))-T$  with fewer than $g+1$ vertices contradiction, implies $|T\cap V(G) | \geq |A|.$ \\Let $|T \cap V(G) | =\{u_{a_1},...,u_{a_z}\}$ since,$T \cap V(G)  \subseteq T$ also the $H$ copy corresponding to each rooted vertices in $T\cap V(G)$ will be contained in $T$.$|T|\geq |T\cap V(G)|m+ \geq |A| m=|U|$.\end{proof}
\end{therm}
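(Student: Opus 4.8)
The plan is to mirror the two-sided argument used for the edge corona and neighbourhood corona: first exhibit an explicit $R_g$-cutset of $H(G)$ of size $|A|m$ to get the upper bound $\kappa_g(H(G))\le |A|m$, then show every $R_g$-cutset $T$ has $|T|\ge |A|m$. For the upper bound I would take the given minimum cutset $A\subseteq V(G)$ with every component of $G-A$ containing at least $k+1$ vertices, and set $U = A \cup \bigcup_{v_i\in A} V(H_i)$, where $H_i$ is the copy of $H$ hung at $v_i$ (note $|U| = |A| + |A|\,(m-1)\cdot\text{?}$ — here one must be careful: deleting the root vertex $(u_i,v_i)$ together with the whole copy $H_i$ attached at $v_i$ means deleting all $m$ vertices of that copy, so $|U| = |A|\,m$, consistent with the statement). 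Since $H(G)$ restricted to the $G$-layer is exactly $G$, removing $U$ leaves $H(G)-U$ disconnected, and each surviving component contains a component $C$ of $G-A$ with $|V(C)|\ge k+1$, each of whose vertices still carries its full copy of $H$, giving at least $(k+1)m$ vertices per component; since $g+1\le (k+1)m$ this is an $R_g$-cutset, so $\kappa_g(H(G))\le |A|m$.

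For the lower bound I would let $T$ be any $R_g$-cutset realizing $\kappa_g(H(G))$ and prove the claim $|T\cap V(G')|\ge |A|$, where $V(G')$ denotes the $G$-layer of $H(G)$ (the vertices of the form $(r,v)$ with $r$ the root). Suppose not; then $G - (T\cap V(G'))$, which is the $G$-layer minus fewer than $|A|$ vertices, must be disconnected — otherwise the whole $G$-layer plus all hanging copies is connected in $H(G)-T$ and $T$ is not a cutset — and since $A$ is a \emph{minimum} cutset of $G$, removing fewer than $|A|$ vertices cannot disconnect $G$ while keeping every component large; more precisely, some component $C_1$ of $G-(T\cap V(G'))$ has at most $k$ vertices, so $C_1$ together with the copies of $H$ at its vertices forms a component of $H(G)-T$ on at most $km < g+1$ vertices, contradicting that $T$ is an $R_g$-cutset. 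Hence $|T\cap V(G')|\ge |A|$. Finally, for each root vertex $(r,v)\in T\cap V(G')$ whose removal participates in disconnecting the graph, the entire copy $H_i$ attached at $v$ becomes a candidate small component unless it too lies in $T$ or is absorbed into a large component; arguing as above (a copy $H_i$ of size $m$ detached from the $G$-layer has only $m\le g$ vertices when $g+1>m$... ) one concludes the $m-1$ non-root vertices of $H_i$ must also lie in $T$, so $|T|\ge |T\cap V(G')|\cdot m \ge |A|m$. Combining the two bounds gives $\kappa_g(H(G)) = |A|m$.

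The main obstacle I anticipate is the clean handling of the lower-bound absorption step: one must argue carefully that a deleted root vertex genuinely forces the deletion of its whole attached copy, rather than that copy being salvageable by attachment to a large surviving component. This requires using the structure of the rooted product — each copy $H_i$ meets the rest of $H(G)$ only through its root $(r,v_i)$ — so that once $(r,v_i)\in T$ the copy $H_i$ is a connected component of $H(G)-T$ on $m-1 < g+1$ vertices (when $g+1 > m-1$, which the hypothesis $km < g+1$ with $k\ge 1$ guarantees), forcing its inclusion in $T$. A secondary subtlety is bookkeeping the exact cardinality $|U|=|A|m$ versus $|A|(m+1)$: the root vertex is shared between the $G$-layer count and the copy, so the copy contributes only $m-1$ fresh vertices, i.e. $|U| = |A| + |A|(m-1) = |A|m$, which I would state explicitly to avoid an off-by-$|A|$ error. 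The rest is routine and parallels the edge-corona proof already given.
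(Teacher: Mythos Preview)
Your proposal is correct and follows essentially the same two-sided argument as the paper: exhibit the cutset $U=A\cup\bigcup_{v_i\in A}V(H_i)$ of size $|A|m$ for the upper bound, then for any $R_g$-cutset $T$ prove $|T\cap V(G')|\ge|A|$ by the small-component contradiction and argue that each deleted root forces its entire hanging copy into $T$, giving $|T|\ge|T\cap V(G')|\,m\ge|A|m$. Your treatment of the cardinality $|U|=|A|+|A|(m-1)=|A|m$ and of the absorption step via $g+1>km\ge m>m-1$ is in fact more explicit than the paper's own proof.
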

\begin{therm}
    Let $G$, $H$ be two arbitrary graphs. Then,
    \begin{enumerate}
    \item For $0 \leq g \leq m-2$.
    \begin{enumerate}[label=\alph*)] 
        \item  If $\lambda(G)=1 \implies \lambda_g(H(G))=1$.
        \item  If $\lambda (G) >1 \implies $ $ \lambda_g(H(G))=min\{\lambda(G),deg(v^r)\} $ where $v^r$ denotes the rooted  vertex. \end{enumerate}
        \end{enumerate}
        \begin{proof}
        Let $\lambda(G)=1$, there exists $e \in E(G)$ such that $G-e$ is disconnected. The structure of $H(G)$ assures $H(G)-e$ is not connected, and each connected component of $H(G)-e$ has at least $m$ vertices. Hence, $\lambda_g(H(G)) \leq 1$. since, $H(G)$ is connected $1\leq \lambda_g(H(G))$. This proves the claim.\\
        Let $\lambda(G)> 1$. Let $T\subseteq E(G)$ be the minimum edge cut of $G$ i.e., $|T|=\lambda (G)$. By the definition of $H(G)$, $H(G)-T $ is disconnected with at least $m$ vertices, which implies $\lambda_g(H(G) \leq |T|=\lambda(G)$. Let $v^r$ be the rooted vertex, $H(G)-E[v^r,H] $ is disconnected with atleast $m-1$ vertices, which implies $\lambda_g(H(G)) \leq deg(v^r)$. Hence, $\lambda_g(H(G)) \leq min \{ \lambda(G),deg(v^r)\}$. To prove the reverse inequality,if $\lambda(G) \leq deg(v^r)$, then by definition of $H(G)$, it preserves the structure of $G$ hence $\lambda(G) \leq \lambda_g(H(G))$. If $deg(v^r) \leq \lambda(G)$ deleting $deg(v^r)-1$ edges leaves the graph $H(G)$ connected. Therefore, $min \{ \lambda(G),deg(v^r)\} \leq \lambda_g(H(G))$. This validates the claim.
    \end{proof} 
    2. If $k(m-2) < g+1 \leq (k+1)m$ then $\lambda_g(H(G))=\lambda_k(G) $ where $\lambda_k(G)$ is the minimum edge cut whose removal disconnects the graph with at least $(k+1)$ vertices.
    \begin{proof}
        Let $\lambda_k(G)=|S|$ where $S \subseteq E(G)$ such that $G-S$ not connected with at least $k+1$ vertices. $H(G)-S$ is disconnected with at least $(k+1)m$ vertices. $\lambda_g(H(G)) \leq |S|.$ Let $\lambda_g(H(G))=T$ we show $|T| \geq |S|$. Claim: $|T \cap E(G)|\geq |S|$. Let $|T \cap E(G)< |S|$. Then, $G-T$ has a connected component with at least $k$ vertices. Hence, $H(C_1)$ is disconnected component of $H(G)$ with at most $km$ vertices, contradiction. $\lambda_g(H(G))=|T| \geq |T \cap E(G)|\geq |S|=\lambda_k(G).$ 
    \end{proof}

\end{therm}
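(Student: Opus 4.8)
The plan is to establish the two inequalities $\lambda_g(H(G)) \le \lambda_k(G)$ and $\lambda_g(H(G)) \ge \lambda_k(G)$ separately, in the same spirit as the proof of part~1(ii) of this theorem. Throughout I identify $E(G)$ with the set of \emph{root edges} of $H(G)$, that is, the edges $(v^r,u^r)$ with $vu \in E(G)$, where $v^r$ denotes the root vertex of the copy $H_v$; every edge of $H(G)$ is either a root edge or an internal edge of a single copy $H_v$.

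For the upper bound, let $S \subseteq E(G)$ be a minimum edge set with the property defining $\lambda_k(G)$: $G-S$ is disconnected and every component of $G-S$ has at least $k+1$ vertices. Viewing $S$ as root edges of $H(G)$, the components of $H(G)-S$ are precisely the sets $\bigcup_{v \in C} V(H_v)$ as $C$ ranges over the components of $G-S$, since the root vertices carry a copy of $G$ and each vertex of $H_v$ reaches the rest of its component only through $v^r$. Each such component has $|C|\,m \ge (k+1)m \ge g+1$ vertices, using the hypothesis $g+1 \le (k+1)m$, so $S$ is an $R_g$ edge cut of $H(G)$ and $\lambda_g(H(G)) \le |S| = \lambda_k(G)$.

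For the lower bound, let $T$ be a minimum $R_g$ edge cut of $H(G)$, so $|T| = \lambda_g(H(G))$ and every component of $H(G)-T$ has at least $g+1$ vertices, and set $T_G = T \cap E(G)$. The crux is the claim $|T_G| \ge \lambda_k(G)$, which I would prove by contradiction. If $|T_G| < \lambda_k(G)$, then $T_G$ fails to be an $R_k$ edge cut of $G$, so $G - T_G$ is either connected or has a component $C_1$ with $|C_1| \le k$. In the connected case, all root vertices lie in one component of $H(G)-T$, so every other component is contained in a single copy $H_v$ and has at most $m-1$ vertices, which is smaller than $g+1$ in the stated range and contradicts the hypothesis on $T$. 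In the second case no root edge leaves $C_1$, hence $\bigcup_{v \in C_1} V(H_v)$ is separated from the rest of $H(G)-T$ and contains a component of $H(G)-T$ with at most $|C_1|\,m \le km$ vertices, again contradicting the size bound. Granting the claim, $T_G \subseteq T$ gives $\lambda_g(H(G)) = |T| \ge |T_G| \ge \lambda_k(G)$, which completes the argument.

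I expect the delicate point to be the size bookkeeping at the two ends of the range $k(m-2) < g+1 \le (k+1)m$: one must check that a dangling piece of a single copy (size up to $m-1$) and a small bunch of copies (size up to $km$) are genuinely forced below $g+1$ by the hypothesis, and, near the lower endpoint or when $H$ is small, argue directly that excising such a piece still costs at least $\lambda_k(G)$ edges rather than fewer. Modulo this careful case analysis, the proof is the natural ``project $T$ down to $E(G)$'' argument used repeatedly above.
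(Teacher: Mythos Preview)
Your proposal follows exactly the same ``project $T$ down to $E(G)$'' strategy as the paper: the upper bound uses a minimum $R_k$ edge cut $S$ of $G$ as an $R_g$ edge cut of $H(G)$, and the lower bound argues that $|T\cap E(G)|\ge \lambda_k(G)$ by contradiction, producing a component of $H(G)-T$ that is too small. In fact your version is slightly more careful than the paper's, since you explicitly separate the case where $G-T_G$ is still connected (the paper passes over this) and you flag the numerical mismatch between the lower endpoint $k(m-2)$ of the stated range and the sizes $m-1$ and $km$ that arise in the contradiction step --- an issue the paper's proof does not address either.
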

    
    \bibliographystyle{plain}
\bibliography{Refrences}
\end{document}